  \theoremstyle{definition}
\newtheorem{remark}{Remark}
\newtheorem{conjecture}{Conjecture}
\newtheorem{theorem}{Theorem}
\newtheorem{lemma}{Lemma}
\newtheorem{corollary}{Corollary}
\let\@@pmod\pmod
\DeclareMathOperator{\Aut}{Aut}
\DeclareRobustCommand{\pmod}{\@ifstar\@pmods\@@pmod}
\def\@pmods#1{\mkern4mu({\operator@font mod}\mkern 6mu#1)}
\begin{document}
\title{Average Cyclicity for Elliptic Curves in Torsion Families}
\author{Luke Fredericks}
\maketitle
\begin{abstract}
We prove asymptotic formulas for cyclicity of reductions of elliptic curves over the rationals in a family of curves having specified torsion. These results agree with established conditional results and with average results taken over larger families. As a key tool, we prove an analogue of a result of Vl\u{a}du\c{t} that estimates the number of elliptic curves over a finite field with a specified torsion point and cyclic group structure.
\end{abstract}

\section{Introduction}
Let $E$ be an elliptic curve over $\mathbb{Q}$. Then $E$ is defined by a Weierstrass equation

\begin{align*}
E:y^2+a_1xy+a_3y^2=x^3+a_2x^2+a_4x+a_6, \hspace{10pt} a_i\in \mathbb{Q}.
\end{align*}

By an admissible changes of variable and after clearing denominators, $E$ can be given by a short Weierstrass equation
\begin{align*}
E_{(a, b)}: y^2=x^3+ax+b, \hspace{5pt} a, b \in \mathbb{Z}.
\end{align*}

It is well-known that given an elliptic curve $E$ over a field $K$, the set $E(K)$ of $K$-rational points of $E$ has the structure of an abelian group. When $K$ is a number field, the Mordell-Weil theorem says that $E(K)$ is finitely generated, and Mazur's torsion theorem classifies the possibilities for $E(\mathbb{Q})_{tors}$. If $K=\mathbb{F}_q$ is the finite field of $q$ elements, then $E(\mathbb{F}_q)$ is a finite abelian group of rank at most 2; that is,
\begin{align*}
E(\mathbb{F}_q)\cong\mathbb{Z}/n_1 \mathbb{Z} \times \mathbb{Z}/n_2 \mathbb{Z}\text{ where }n_2 \mid n_1.
\end{align*} 
Denote by $a_q(E)=\#E(\mathbb{F}_q)-q-1$; then the Hasse bound states that
$  \mid a_q(E)\mid \leq 2\sqrt{q}.$

For all but finitely many primes $p$, we obtain an elliptic curve $E_p/\mathbb{F}_p$ by reducing the coefficients of a Weierstrass model of $E$ modulo $p$. It is natural to ask how how the properties of $E_p$ vary with $p$. Given $E/\mathbb{Q}$, denote by 
\begin{eqnarray*}
\pi_E^t(x)= \#\{p\leq x: a_p(E)=t\}\label{frobtrace}\\ 
\pi_E^{cyc}(x) = \#\{p\leq x: E_p(\mathbb{F}_p) \text{ is cyclic} \}\label{cyclic}. 
\end{eqnarray*}

The asymptotic growth of these functions are the subjects of the Lang-Trotter conjecture and the cyclicity conjecture, respectively. 
\begin{conjecture}[Lang-Trotter]
Let $E/\mathbb{Q}$ be an elliptic curve, and let $r\in \mathbb{Z}$ with $r\neq 0$ if $E$ has complex multiplication. Then 
\begin{eqnarray} \label{langtrotter}
\pi_E^r(x)\sim C_{E, r}\frac{\sqrt{x}}{\log{x}}, 
\end{eqnarray}
where $C_{E, r}$ is an explicit constant depending only on $E$ and $r$.
\end{conjecture}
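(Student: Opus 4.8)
The plan is not to give a full proof, since the Lang--Trotter conjecture is open; rather, I will describe the probabilistic heuristic that predicts both the order of magnitude $\sqrt{x}/\log x$ and the explicit constant $C_{E,r}$, and indicate which portions of that heuristic can be made unconditional. The starting point is to treat, for each modulus $m$, the event ``$a_p(E)\equiv r \pmod m$'' and the event ``$a_p(E)$ takes a prescribed value inside the Hasse interval $[-2\sqrt p,2\sqrt p]$'' as governed respectively by $\ell$-adic (Galois-theoretic) and archimedean (Sato--Tate) data, and to multiply the resulting densities.

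First I would handle the congruence conditions. For a fixed $m$, the $m$-division field $\mathbb{Q}(E[m])$ is Galois over $\mathbb{Q}$ with $\Gal(\mathbb{Q}(E[m])/\mathbb{Q})$ embedded in $\mathrm{GL}_2(\mathbb{Z}/m\mathbb{Z})$, and for $p$ of good reduction unramified in this field the reduction of $a_p(E)$ modulo $m$ equals the trace of the Frobenius conjugacy class. Hence $a_p(E)\equiv r \pmod m$ cuts out a union of conjugacy classes, and by the Chebotarev density theorem its density is
\[
\delta_m(r) \;=\; \frac{\#\{\, g \in \Gal(\mathbb{Q}(E[m])/\mathbb{Q}) : \mathrm{tr}(g) \equiv r \pmod m \,\}}{\#\Gal(\mathbb{Q}(E[m])/\mathbb{Q})}.
\]
Letting $m \to \infty$ through divisibility and taking the product of the corrected local factors $\delta_\ell(r)$ over primes $\ell$ --- which converges because Serre's open-image theorem forces $\Gal(\mathbb{Q}(E[m])/\mathbb{Q})$ to be all of $\mathrm{GL}_2(\mathbb{Z}/m\mathbb{Z})$ for all but finitely many $\ell$ in the non-CM case, and which is computed from explicit CM theory otherwise --- produces the non-archimedean part of $C_{E,r}$.

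Next I would incorporate the Hasse-interval constraint. By the Sato--Tate law the normalized traces $a_p(E)/(2\sqrt{p})$ equidistribute on $[-1,1]$ with respect to $\tfrac{2}{\pi}\sqrt{1-t^2}\,dt$, so heuristically the chance that the integer $a_p(E)$ equals the fixed value $r$ is $\tfrac{1}{\pi\sqrt{p}}\sqrt{1-(r/2\sqrt p)^2}\bigl(1+o(1)\bigr)\sim \tfrac{1}{\pi\sqrt p}$. Multiplying this ``local archimedean'' factor by the product of the congruence densities and summing over $p \le x$, partial summation turns $\sum_{p\le x} p^{-1/2}$ into $\tfrac{2\sqrt x}{\log x}$, which yields the predicted shape $C_{E,r}\,\sqrt x/\log x$ with $C_{E,r}$ the full product of local factors (suitably normalized).

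The hard part --- and the reason Lang--Trotter is still open --- is that this argument silently assumes independence of the $\ell$-adic and archimedean conditions and, more critically, requires uniformity in the modulus $m$ up to a power of $x$, which is far beyond effective Chebotarev or any known quantitative form of Sato--Tate. What \emph{can} be proved rigorously is the averaged statement: if one averages $\pi_E^r(x)$ over a box of curves $E_{(a,b)}$ with $|a|\le A$, $|b|\le B$ chosen so that reduction modulo $m$ is close to uniform, then the congruence input needed at each $m$ can be supplied by switching the order of summation and applying point-counting or large-sieve estimates in place of pointwise Chebotarev, and one recovers the asymptotic with the averaged constant; this is the mechanism behind the results of Fouvry--Murty and David--Pappalardi, and averaging instead over a torsion subfamily is precisely the kind of argument carried out in this paper for the cyclicity counting function $\pi_E^{cyc}(x)$.
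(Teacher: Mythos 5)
This statement is the Lang--Trotter conjecture, which the paper merely records as an open conjecture (it provides motivation and context for the average results); the paper contains no proof of it, and none is expected. Your write-up correctly recognizes this and reproduces the standard probabilistic heuristic behind the prediction --- Chebotarev densities in the division fields $\mathbb{Q}(E[m])$ for the congruence conditions, Sato--Tate for the archimedean factor $\sim 1/(\pi\sqrt{p})$, and partial summation giving $\sqrt{x}/\log x$ --- together with an accurate account of why only averaged versions (Fouvry--Murty, David--Pappalardi, and the torsion-family analogues relevant to this paper) are provable. This is the appropriate treatment: no gap to report, since a genuine proof is not available to anyone, and your heuristic matches the accepted derivation of $C_{E,r}$.
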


Denote by $\mathbb{Q}(E[k])$ the field obtained by adjoining to $\mathbb{Q}$ the coordinates of all points in the $k$-torsion subgroup $E[k]$ of $E$,  and by li$(x)$ the logarithmic integral.
\begin{conjecture}[Cyclicity]
Let $E/\mathbb{Q}$ be a non-CM elliptic curve. Then
\begin{align}\label{cyclicity}
\pi_E^{cyc}(x)\sim \left(\sum_{k\geq 1} \frac{\mu(k)}{[\mathbb{Q}(E[k]):\mathbb{Q})]}\right)\mathrm{li}(x).
\end{align}
\end{conjecture}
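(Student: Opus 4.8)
\emph{Proof plan (conditional on GRH).} The statement as written — unconditionally, for every non-CM $E/\mathbb{Q}$ — is open, so I will sketch the route to it conditional on the Generalized Riemann Hypothesis for the Dedekind zeta functions of the division fields $\mathbb{Q}(E[k])$, following Serre's adaptation of Hooley's treatment of Artin's primitive root conjecture. (For CM curves the analogous statement is unconditional, with the $\mathrm{GL}_2$-Chebotarev input replaced by equidistribution in ray class fields.) The starting point is combinatorial: $E_p(\mathbb{F}_p)$ is cyclic exactly when it contains no subgroup $(\mathbb{Z}/\ell\mathbb{Z})^2$, i.e.\ when $E[\ell]\not\subseteq E_p(\mathbb{F}_p)$ for every prime $\ell$; and for $p$ of good reduction with $p\nmid\ell$, the inclusion $E[\ell]\subseteq E_p(\mathbb{F}_p)$ holds precisely when $p$ splits completely in $\mathbb{Q}(E[\ell])$ (Frobenius at $p$ acting trivially on $E[\ell]$). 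Since $\mathbb{Q}(E[k])$ is the compositum of the $\mathbb{Q}(E[\ell])$ with $\ell\mid k$ when $k$ is squarefree, inclusion--exclusion gives, for any parameter $M$ and with $P_k(x):=\#\{p\le x:\ E[k]\subseteq E_p(\mathbb{F}_p)\}$,
\begin{align*}
S_M(x)-R_M(x)\le\pi_E^{cyc}(x)\le S_M(x)+O(1),\qquad S_M(x):=\sum_{\substack{k\ \text{squarefree}\\ \ell\mid k\,\Rightarrow\,\ell\le M}}\mu(k)\,P_k(x),
\end{align*}
where $R_M(x)=\#\{p\le x:\ E[\ell]\subseteq E_p(\mathbb{F}_p)\text{ for some prime }\ell>M\}$. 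By Chebotarev $P_k(x)\sim\mathrm{li}(x)/d_k$ with $d_k:=[\mathbb{Q}(E[k]):\mathbb{Q}]$, and by the Hasse bound $E[k]\subseteq E_p(\mathbb{F}_p)$ forces $k^2\le(\sqrt p+1)^2$, so $S_M(x)$ is a finite sum. The plan is to pick $M=M(x)\to\infty$ slowly and prove $S_M(x)=\big(\sum_{k\ge1}\mu(k)/d_k\big)\mathrm{li}(x)+o(\mathrm{li}(x))$ together with $R_M(x)=o(\mathrm{li}(x))$.

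\textbf{The main term $S_M(x)$.} Take $M=\log\log x$, so each retained $k$ satisfies $k\le(\log x)^{1+o(1)}$, hence $d_k\le(\log x)^{O(1)}$, and there are at most $2^{\pi(M)}=(\log x)^{o(1)}$ of them. For each such $k$, apply the GRH-conditional effective Chebotarev density theorem (Lagarias--Odlyzko, Serre) to $\mathbb{Q}(E[k])/\mathbb{Q}$ with the identity conjugacy class; since the primes ramifying in $\mathbb{Q}(E[k])$ all divide $k$ times the conductor of $E$, one has $\log|\mathrm{disc}\,\mathbb{Q}(E[k])|\ll_E d_k\log k$, and the theorem yields
\begin{align*}
P_k(x)=\frac{\mathrm{li}(x)}{d_k}+O_E\big(\sqrt x\,\log(kx)\big).
\end{align*}
Since $E$ is non-CM, Serre's open image theorem gives $d_k\gg_E k^{4-o(1)}$, so $\sum_k 1/d_k$ converges and the retained partial sum of $\sum_k\mu(k)/d_k$ tends to the full sum as $M\to\infty$. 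Summing the above over the retained $k$, the accumulated error is $\ll_E(\log x)^{o(1)}\sqrt x\log x=o(x/\log x)$; hence $S_M(x)=\big(\sum_{k\ge1}\mu(k)/[\mathbb{Q}(E[k]):\mathbb{Q}]\big)\mathrm{li}(x)+o(\mathrm{li}(x))$, the conjectured main term.

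\textbf{The tail $R_M(x)$.} Trivially $R_M(x)\le\sum_{M<\ell\le\sqrt x+1}P_\ell(x)$ (the upper limit from Hasse). Split at $z=\sqrt x/(\log x)^2$. For $M<\ell\le z$ the field $\mathbb{Q}(E[\ell])$ (of degree $\asymp_E\ell^4$) still obeys the Chebotarev estimate above, giving
\begin{align*}
\sum_{M<\ell\le z}P_\ell(x)&\ll_E\mathrm{li}(x)\sum_{\ell>M}\ell^{-4}+\pi(z)\,\sqrt x\log x\\
&\ll\frac{x}{M^{3}\log x}+\frac{x}{(\log x)^{2}}=o(x/\log x).
\end{align*}
For $z<\ell\le\sqrt x+1$, the Weil pairing gives $\mathbb{Q}(\zeta_\ell)\subseteq\mathbb{Q}(E[\ell])$, so $E[\ell]\subseteq E_p(\mathbb{F}_p)$ forces $p\equiv1\pmod\ell$ and hence $P_\ell(x)\le\pi(x;\ell,1)\ll x/(\ell\log(x/\ell))$ by Brun--Titchmarsh; therefore
\begin{align*}
\sum_{z<\ell\le\sqrt x+1}P_\ell(x)&\ll x\sum_{z<\ell\le\sqrt x+1}\frac{1}{\ell\log(x/\ell)}\\
&\ll\frac{x}{\log x}\sum_{z<\ell\le\sqrt x+1}\frac{1}{\ell}=o(x/\log x),
\end{align*}
the last sum being $o(1)$ by Mertens' theorem, since $z$ lies within a power of $\log x$ of the Hasse cutoff $\sqrt x$. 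Hence $R_M(x)=o(\mathrm{li}(x))$, and combining with the previous paragraph proves the conjectured asymptotic under GRH.

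\textbf{The main obstacle.} Every step above is routine sieve bookkeeping except the effective Chebotarev input invoked for $M<\ell\le z$: one needs, for $\ell$ as large as $\sqrt x/(\log x)^2$, a prime count for $\mathbb{Q}(E[\ell])$ — a number field of degree $\asymp\ell^{4}$, hence as large as $x^{2}/(\log x)^{8}$ — with error of size $O(\sqrt x\cdot\mathrm{polylog}\,x)$. Unconditionally, the Lagarias--Odlyzko theorem only provides an error of the shape $x\exp(-c(\log x)^{1/2}d_\ell^{-1/2})$, which is worthless once $d_\ell$ exceeds a fixed power of $\log x$; GRH is exactly what pushes the range of usable moduli up to the Hasse barrier, beyond which the elementary argument of the previous paragraph takes over. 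This is precisely where an unconditional proof for non-CM curves currently breaks down, and it is why the present paper instead studies the \emph{average} of $\pi_E^{cyc}(x)$ over a torsion family of curves, replacing GRH-strength equidistribution by the Vl\u{a}du\c{t}-type count of elliptic curves over $\mathbb{F}_q$ with a specified torsion point and cyclic group structure announced in the abstract.
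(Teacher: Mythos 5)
There is no proof of this statement in the paper to compare against: it is stated as the Cyclicity Conjecture, and the paper's own contribution is not a proof of it but an average version over torsion families (Theorem \ref{mainresult}), with the conditional results of Serre and of Cojocaru--Murty quoted for context. Your proposal correctly recognizes this and, rather than claiming the unconditional statement, sketches the GRH-conditional proof. That sketch is, in substance, exactly the Hooley--Serre argument underlying the conditional theorem the paper cites: the criterion that $E_p(\mathbb{F}_p)$ is cyclic if and only if $p$ does not split completely in any $\mathbb{Q}(E[\ell])$, inclusion--exclusion over squarefree $k$ with a cutoff $M$, GRH-effective Chebotarev with error $O_E(\sqrt{x}\log(kx))$ for the retained $k$, Serre's open image theorem to make $\sum_k \mu(k)/[\mathbb{Q}(E[k]):\mathbb{Q}]$ absolutely convergent, and a tail handled by Chebotarev up to $z$ close to the Hasse barrier and by the Weil pairing plus Brun--Titchmarsh for $z<\ell\le \sqrt{x}+1$. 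The quantitative bookkeeping you give (choice of $M$, $z=\sqrt{x}/(\log x)^2$, Mertens for the last range) is sound at the level of a sketch, and your closing paragraph correctly identifies the single point where unconditionality fails, namely equidistribution in division fields of degree far exceeding any power of $\log x$. So the proposal is an accurate account of the known conditional result, not a proof of the conjecture as stated --- which is the same position the paper itself is in; the paper's response to that obstruction is to average over the family $E_m(a)$ and replace GRH-strength Chebotarev by the fixed-field count of Theorem \ref{fixedfield}, which is the content your sketch correctly points to but does not develop.
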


The study of $\pi_E^{cyc}$ goes back to the work of Borosh, Moreno and Porta \cite{boroshmorenoporta} who suggested that for certain chosen examples of $E/\mathbb{Q}$, $E_p(\mathbb{F}_p)$ is cyclic for infinitely many $p$. Serre formulated and proved the cyclicity conjecture conditional on the Generalized Riemann Hypothesis for the division fields of $E$ \cite{serreresume}. The best result to date is the following conditional theorem of Cojocaru and Murty.
 \begin{theorem}\cite[Theorem 1.1]{cojocarumurtycyclicity}  
Let $E$ be a non-CM elliptic curve defined over $\mathbb{Q}$ of conductor $N$. Assuming GRH for the Dedekind zeta functions of the division fields of $E$, we have that 
\begin{eqnarray}
\pi_E^{cyc}(x)=\mathfrak{f}_E \mathrm{li}(x)+O_N\left(x^{5/6}(\log{x})^{2/3}\right)
\end{eqnarray}
where
\begin{eqnarray}
 \mathfrak{f}_E=\sum_{k\geq 1} \frac{\mu(k)}{[\mathbb{Q}(E[k]):\mathbb{Q})]}.
\end{eqnarray}
\end{theorem}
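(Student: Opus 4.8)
The plan is to carry out Serre's strategy for the cyclicity conjecture quantitatively, the essential analytic input being the effective form of the Chebotarev density theorem under GRH. The starting point is the algebraic observation that, for a prime $p\nmid kN$ of good reduction, $E_p(\mathbb{F}_p)$ contains a subgroup isomorphic to $(\mathbb{Z}/k\mathbb{Z})^2$ if and only if the Frobenius at $p$ acts trivially on $E[k]$, i.e.\ if and only if $p$ splits completely in the division field $\mathbb{Q}(E[k])$. Since $E_p(\mathbb{F}_p)\cong\mathbb{Z}/n_1\mathbb{Z}\times\mathbb{Z}/n_2\mathbb{Z}$ with $n_2\mid n_1$, the group $E_p(\mathbb{F}_p)$ is non-cyclic precisely when $(\mathbb{Z}/\ell\mathbb{Z})^2\subseteq E_p(\mathbb{F}_p)$ for some prime $\ell$. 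Writing $P_k(x)=\#\{p\le x:\ p\nmid N,\ p\text{ splits completely in }\mathbb{Q}(E[k])\}$, and using that $\mathbb{Q}(E[k])$ is the compositum of the $\mathbb{Q}(E[\ell])$ over primes $\ell\mid k$, so that $p$ splits completely in $\mathbb{Q}(E[k])$ iff it does so in each $\mathbb{Q}(E[\ell])$, an inclusion--exclusion over squarefree $k$ yields
\begin{equation*}
\pi_E^{cyc}(x)\;=\;\sum_{k\ge 1}\mu(k)\,P_k(x)\;+\;O(1).
\end{equation*}
This sum is finite: by the Hasse bound, $p$ split in $\mathbb{Q}(E[k])$ forces $k^{2}\le\#E_p(\mathbb{F}_p)\le(\sqrt{p}+1)^{2}$, hence $P_k(x)=0$ once $k>\sqrt{x}+1$.

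Next I would fix two parameters $y<z$, to be optimized, and break the sum at $y$ and $z$. For $k\le y$ I would apply the effective Chebotarev density theorem to $\mathbb{Q}(E[k])/\mathbb{Q}$ with the trivial conjugacy class: under GRH for $\zeta_{\mathbb{Q}(E[k])}$ one gets
\begin{equation*}
P_k(x)\;=\;\frac{\mathrm{li}(x)}{[\mathbb{Q}(E[k]):\mathbb{Q}]}\;+\;O\!\left(\sqrt{x}\,\bigl(\log|\mathrm{disc}\,\mathbb{Q}(E[k])|+[\mathbb{Q}(E[k]):\mathbb{Q}]\log x\bigr)\right),
\end{equation*}
into which I would feed $[\mathbb{Q}(E[k]):\mathbb{Q}]\mid\#\mathrm{GL}_2(\mathbb{Z}/k\mathbb{Z})$ and the bound $\log|\mathrm{disc}\,\mathbb{Q}(E[k])|\ll[\mathbb{Q}(E[k]):\mathbb{Q}]\log(kN)$ coming from the fact that only primes dividing $kN$ ramify. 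Summing the error over $k\le y$ contributes $\sqrt{x}$ times a fixed power of $y$ (times $\log(yNx)$), while the main terms assemble into $\mathfrak{f}_E\,\mathrm{li}(x)$ minus the tail $\sum_{k>y}\mu(k)/[\mathbb{Q}(E[k]):\mathbb{Q}]$. That tail is small once $\sum_k 1/[\mathbb{Q}(E[k]):\mathbb{Q}]$ converges, which follows from $\mathbb{Q}(\zeta_k)\subseteq\mathbb{Q}(E[k])$ together with Serre's open-image theorem forcing $[\mathbb{Q}(E[\ell]):\mathbb{Q}]\gg\ell^{4}$ for all but finitely many primes $\ell$.

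The remaining ranges $y<k\le z$ and $z<k\le\sqrt{x}+1$ are the crux, and their treatment determines the exponent. For $y<k\le z$ I would again invoke effective Chebotarev, but only to extract the upper bound $P_k(x)\ll x/[\mathbb{Q}(E[k]):\mathbb{Q}]+(\text{error})$ and sum it; the point is that one cannot afford to run Chebotarev all the way to $\sqrt{x}$, so $z$ must sit well below $\sqrt{x}$. For $z<k\le\sqrt{x}+1$ I would fall back on the elementary facts that a prime $p$ split in $\mathbb{Q}(E[k])$ satisfies $p\equiv 1\pmod{k}$ (Weil pairing, via $\mathbb{Q}(\zeta_k)\subseteq\mathbb{Q}(E[k])$) and $k^{2}\mid p+1-a_p$ with $|a_p|\le 2\sqrt{x}$; combining a Brun--Titchmarsh bound for primes in the residue class $1\bmod k$ with the identity $\sum_{z<k\le\sqrt{x}}P_k(x)=\sum_{p\le x}\#\{k:\ z<k\le\sqrt{x},\ k\mid n_2(E_p)\}$ controls this range in terms of $\#\{p\le x:\ n_2(E_p)>z\}$. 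Finally I would choose $y$ and $z$ to balance the GRH--Chebotarev error against the main-term tail and the sieve estimate; the optimization, with a careful accounting of logarithmic factors, is what produces the claimed $O_N\!\bigl(x^{5/6}(\log x)^{2/3}\bigr)$.

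The main obstacles I foresee are two. First, all implied constants must depend only on the conductor $N$, not on the particular Galois representation of $E$; this forces uniform versions of the degree and discriminant bounds and of the lower bound $[\mathbb{Q}(E[\ell]):\mathbb{Q}]\gg\ell^{4}$ — i.e.\ a bound on the index of the mod-$\ell$ (or adelic) image of Galois in terms of $N$ — rather than bounds depending on $E$. Second, and more delicate, is the joint optimization over the three ranges: no single estimate above is hard, but they must be arranged so that each piece — the summed Chebotarev error for $k\le y$, the middle-range sum for $y<k\le z$, the sieve bound for $z<k\le\sqrt{x}$, and the tail $\sum_{k>y}\mu(k)/[\mathbb{Q}(E[k]):\mathbb{Q}]$ — stays within $x^{5/6}(\log x)^{2/3}$, and it is this balancing, not any individual inequality, that is the technical heart of the theorem.
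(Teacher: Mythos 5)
First, a point of context: the paper does not prove this statement at all — it is quoted verbatim from Cojocaru and Murty (\cite[Theorem 1.1]{cojocarumurtycyclicity}) as background for the average results, so there is no in-paper proof to compare against. Judged against the actual proof in the literature, your outline does reconstruct the right strategy: the cyclicity criterion via splitting in the $\ell$-division fields, inclusion--exclusion over squarefree $k$ (with the sum truncated at $k\le\sqrt{x}+1$ by the Hasse bound), effective Chebotarev under GRH for small $k$ with discriminant/degree bounds $\log|\mathrm{disc}|\ll [\mathbb{Q}(E[k]):\mathbb{Q}]\log(kN)$, convergence of $\mathfrak{f}_E$ via $\mathbb{Q}(\zeta_k)\subseteq\mathbb{Q}(E[k])$ and open image, and a split into ranges with a final optimization. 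This is essentially how Cojocaru--Murty argue.

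However, as a proof of the stated error term $O_N\bigl(x^{5/6}(\log x)^{2/3}\bigr)$ there is a genuine gap, and it sits exactly where you place the ``technical heart.'' The tools you actually write down for the outer range are insufficient: a Brun--Titchmarsh bound for $p\equiv 1\pmod{k}$ summed over $z<k\le\sqrt{x}$ gives at best $O(x/\log x)$ when $k$ runs over primes (and $O(x)$ over general $k$, since $\sum_{z<k\le\sqrt{x}}1/\varphi(k)\gg\log(\sqrt{x}/z)$), which is far above $x^{5/6}$ no matter how $y,z$ are chosen; likewise the divisor identity only says each prime $p$ is counted $O(1)$ times, it does not by itself bound $\#\{p\le x:\ n_2(E_p)>z\}$. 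To reach exponent $5/6$ one must exploit the full condition $\ell\mid p-1$ and $\ell^2\mid p+1-a_p$ (equivalently $\ell\mid a_p-2$) in an essential way: the case $a_p=2$ forces $\ell^2\mid p-1$ and contributes $O(x/z)$, while the case $a_p\neq 2$ requires a further GRH--Chebotarev (or mixed-representation) estimate attached to the non-trivial conjugacy set of trace-$2$, determinant-$1$ elements — this is precisely what Cojocaru--Murty do, and it, together with the explicit choice of the cut-off parameters, is what produces $x^{5/6}(\log x)^{2/3}$. Since your proposal defers both this input and the balancing, it is a correct plan consistent with the known proof, but it does not yet establish the quantitative statement; the small-$k$ and convergence parts are fine and routine.
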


Unconditionally, we cannot prove that the asymptotic in \eqref{langtrotter} or \eqref{cyclicity} holds for a single elliptic curve $E$. However, starting with the work of Fouvry and Murty \cite{fouvrymurtyMR1382477}, average versions of Conjectures \ref{langtrotter} and \ref{cyclicity} have been obtained. These average results provide strong unconditional evidence for the corresponding conjectures. 

 Let
\begin{eqnarray}
\pi_{1/2}(X)=\int_2^X\frac{dt}{2\sqrt{t} \log{t}}\sim\frac{\sqrt{X}}{\log{X}}.
\end{eqnarray}

In the case of the Lang-Trotter conjecture we have
\begin{theorem}\cite[Corollary 1.3]{davidpappalardi}  Let $E(a, b):y^2=x^3+ax+b$, and let $\epsilon>0$. If $A, B>X^{1+\epsilon}$ then we have as $X\to \infty$
\begin{eqnarray}
\frac{1}{4AB}\sum_{\substack{\mid a\mid \leq A\\ \mid b\mid \leq B}} \pi_{E(a, b)}^r(X) \sim D_r\pi_{1/2}(X)
\end{eqnarray}
where
\begin{eqnarray}
D_r=\frac{2}{\pi}\prod_{\ell\nmid r}\frac{\ell(\ell^2-\ell-1)}{(\ell+1)(\ell-1)^2}\prod_{\ell\mid r}\frac{\ell^2}{\ell^2-1}.
\end{eqnarray}

\end{theorem}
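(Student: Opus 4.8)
The plan is to interchange the order of summation and reduce the average to a sum of Hurwitz class numbers over primes; with the generous hypothesis $A,B>X^{1+\epsilon}$ all the error terms become negligible and only this class-number sum requires real work. Writing out the definition and swapping the sums,
$$\frac{1}{4AB}\sum_{\substack{|a|\le A\\|b|\le B}}\pi^r_{E(a,b)}(X)=\frac{1}{4AB}\sum_{p\le X}M_p(r),\qquad M_p(r):=\#\{(a,b):|a|\le A,\,|b|\le B,\,a_p(E_{a,b})=r\}.$$
Since $a_p(E_{a,b})$ depends only on $(a,b)\bmod p$ for primes $p$ of good reduction (the bad pairs being negligible after normalization), and since $A,B>X^{1+\epsilon}\ge p^{1+\epsilon}$ for $p\le X$, each residue class in $(\mathbb{Z}/p\mathbb{Z})^2$ is represented by $\frac{4AB}{p^2}+O(A/p+B/p+1)$ pairs in the box, so
$$M_p(r)=\frac{4AB}{p^2}N_p(r)+O\!\big((A/p+B/p+1)\,N_p(r)\big),\qquad N_p(r):=\#\{(a,b)\in(\mathbb{Z}/p\mathbb{Z})^2:a_p(E_{a,b})=r\}.$$
Granting the bound $N_p(r)\ll p^{3/2+\epsilon}$ from the class-number interpretation below (and $N_p(r)=0$ unless $r^2\le 4p$), a routine estimate shows that the error term, divided by $4AB$, is $o(\sqrt X/\log X)$ thanks to $A,B>X^{1+\epsilon}$, so it remains to analyze $\sum_{p\le X}N_p(r)/p^2$.

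Next I would invoke the theorems of Deuring and Schoof on counting elliptic curves over $\mathbb{F}_p$ with a prescribed trace of Frobenius: for $r^2<4p$,
$$N_p(r)=\tfrac{p-1}{2}\,H(4p-r^2)+O(p),$$
with $H(\cdot)$ the Hurwitz class number, the $O(p)$ absorbing both the curves of $j$-invariant $0$ or $1728$ (which lie on the axes $a=0$, $b=0$ and so number $O(p)$) and the bounded integer corrections to Deuring's formula. Since $H(n)\ll n^{1/2+\epsilon}$, the terms $O(p)/p^2$ and $H(4p-r^2)/p^2$ sum to $O(\log\log X)=o(\sqrt X/\log X)$, whence
$$\frac{1}{4AB}\sum_{\substack{|a|\le A\\|b|\le B}}\pi^r_{E(a,b)}(X)=\frac12\sum_{p\le X}\frac{H(4p-r^2)}{p}+o\!\left(\frac{\sqrt X}{\log X}\right),$$
and everything comes down to the asymptotic evaluation of $\sum_{p\le X}H(4p-r^2)/p$.

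For that sum I would expand $H$ over the square divisors $f^2\mid 4p-r^2$, use Dirichlet's analytic class number formula to rewrite each ordinary class number as $\frac{1}{2\pi}\sqrt{(4p-r^2)/f^2}\,L\big(1,\chi_{-(4p-r^2)/f^2}\big)$, and expand each $L$-value into its Dirichlet series in a new variable $n$. After truncating the $f$- and $n$-sums at a small power of $X$ (the tails controlled by $H(n)\ll n^{1/2+\epsilon}$ together with standard bounds for short character sums and for $L(1,\chi)$), I would interchange summation so that the inner sum over primes reads $\sum_{p\le X}\sqrt{4p-r^2}\,p^{-1}\big(\tfrac{r^2-4p}{f^2 n}\big)$ with $p$ confined to a fixed residue class modulo $4f^2n$. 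Writing $\sqrt{4p-r^2}/p=2p^{-1/2}(1+O(1/p))$ and recognizing the Kronecker symbol, via quadratic reciprocity, as a Dirichlet character in $p$ together with a splitting condition in an imaginary quadratic field, each inner sum is handled by the prime number theorem in arithmetic progressions: it is $\sim 2\delta\,\pi_{1/2}(X)$ with $\delta$ the density of the relevant class when the character is principal, and $o(\pi_{1/2}(X))$ otherwise. Since the conditions at distinct primes $\ell$ are independent by the Chinese Remainder Theorem, the $(f,n)$-weighted sum of these densities factors into an Euler product; computing the local factor — it works out to $\frac{\ell(\ell^2-\ell-1)}{(\ell+1)(\ell-1)^2}$ for $\ell\nmid r$ and $\frac{\ell^2}{\ell^2-1}$ for $\ell\mid r$, the discrepancy reflecting that $\ell\mid r$ forces $\ell\mid 4p-r^2$ and hence changes the count of admissible square factors — and folding in the archimedean factor $\frac2\pi$ coming from $\sqrt{4p-r^2}/p\approx 2/\sqrt p$ yields exactly $D_r\,\pi_{1/2}(X)$.

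The main obstacle is this last step: carrying out the class-number average with enough uniformity — justifying the truncations and the interchange of the $p$-sum with the $f$- and $n$-summations, applying the prime number theorem in arithmetic progressions uniformly in the growing moduli $4f^2n$, and then verifying that the weighted local densities telescope into precisely the stated Euler product. By comparison, the combinatorial reduction, the $j\in\{0,1728\}$ corrections, and the bad-reduction pairs are all routine, and are negligible exactly because $A,B>X^{1+\epsilon}$ leaves ample room; pushing $A$ and $B$ down toward $\sqrt X$ is what would require exploiting cancellation in character sums over the family, but that is not needed here.
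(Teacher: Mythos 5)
This statement is not proved in the paper at all: it is quoted verbatim from David--Pappalardi (their Corollary 1.3) as background for the average Lang--Trotter problem, so there is no internal proof to compare against. That said, your outline is essentially the strategy of the cited source (going back to Fouvry--Murty): interchange the sums, use the fact that $a_p(E_{a,b})$ depends only on $(a,b)\bmod p$ together with $A,B>X^{1+\epsilon}$ to replace the box count by $4AB/p^{2}$ times the fixed-field count $N_p(r)$, invoke Deuring's theorem to write $N_p(r)=\frac{p-1}{2}H(4p-r^2)+O(p)$, and then evaluate $\sum_{p\le X}H(4p-r^2)/p$ by opening the Hurwitz class number via the class number formula, truncating the resulting $f$- and $n$-sums, and applying the prime number theorem in arithmetic progressions, with the Euler product $D_r$ arising from the local densities and the factor $2/\pi$ from the archimedean part. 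The reduction steps and error estimates you give are sound (your $O(p)$ correctly absorbs the $j\in\{0,1728\}$ classes, and the hypothesis $A,B>X^{1+\epsilon}$ does make the box-counting errors negligible against $\sqrt{X}/\log X$). The only substantive caveat is one you flag yourself: the uniform treatment of the class-number average --- justifying the truncations, the interchange with the sum over $p$, and the use of primes in progressions with moduli $4f^{2}n$ growing with $X$ --- is where all the analytic work lives (in the original it requires character-sum bounds of P\'olya--Vinogradov type and careful tail estimates), so as written this is a faithful plan of the known proof rather than a complete argument; but it is the right plan and matches how the result is actually proved in the reference.
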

Here and throughout the paper, $\ell$ denotes a prime number, and a product over $\ell$ is taken over all primes satisfying the given conditions.

In the case of the cyclicity conjecture, Banks and Shparlinski proved

\begin{theorem}\cite[Theorem 17]{banksshparlinski} 
Let $\epsilon>0$ and $K>0$ be fixed. Then, for all integers $A$ and $B$ satisfying $AB\geq x^{1+\epsilon}, A, B \leq x^{1-\epsilon}$, we have 

\[\frac{1}{4AB}\sum_{|a|\leq A}\sum_{|b|\leq B} \pi_{E(a, b)}^{cyc}(x)=C_{cyc}\pi(x)+O(\pi(x)/(\log x)^K), \]
where 
\[C_{cyc}= \prod_{\ell \text{ prime }}\left(1-\frac{1}{\ell(\ell-1)(\ell^2-1)}\right), \] and the constant implied by $O$ depends only on $\epsilon$ and $K$.
\end{theorem}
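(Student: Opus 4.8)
The plan is to detect cyclicity by inclusion--exclusion, interchange all summations, and thereby reduce the average to counting elliptic curves over $\mathbb{F}_p$ in the family whose group of $\mathbb{F}_p$-points contains a fixed $\mathbb{Z}/k\mathbb{Z}\times\mathbb{Z}/k\mathbb{Z}$. A finite abelian group of rank at most $2$ is cyclic precisely when it contains no copy of $\mathbb{Z}/\ell\mathbb{Z}\times\mathbb{Z}/\ell\mathbb{Z}$ for any prime $\ell$, so M\"obius inversion gives
\[
\mathbf{1}\!\left[E_p(\mathbb{F}_p)\text{ cyclic}\right]=\sum_{k\ge1}\mu(k)\,\mathbf{1}\!\left[E_p[k]\subseteq E_p(\mathbb{F}_p)\right].
\]
Since $E_p[k]\subseteq E_p(\mathbb{F}_p)$ forces $k^2\mid\#E_p(\mathbb{F}_p)$, the Hasse bound makes this sum finite: only $k\le\sqrt{x}+1$ contribute when $p\le x$. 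Substituting into $\pi^{cyc}_{E(a,b)}(x)$, interchanging the (now finite) sums over $(a,b)$, over $p\le x$, and over $k$, and noting that the condition depends only on $(a,b)\bmod p$, I arrive at
\[
\sum_{|a|\le A}\sum_{|b|\le B}\pi^{cyc}_{E(a,b)}(x)=\sum_{p\le x}\ \sum_{k\le\sqrt{x}+1}\mu(k)\sum_{\substack{|a|\le A,\ |b|\le B\\ p\,\nmid\,4a^3+27b^2}}\mathbf{1}\!\left[(a,b)\bmod p\in\mathcal{S}_k(p)\right],
\]
where $\mathcal{S}_k(p)\subseteq\mathbb{F}_p^2$ is the set of $(a_0,b_0)$ with $4a_0^3+27b_0^2\ne0$ for which $E_{(a_0,b_0)}[k]\subseteq E_{(a_0,b_0)}(\mathbb{F}_p)$.

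The first of two main ingredients is an asymptotic for $N_k(p):=\#\mathcal{S}_k(p)$ --- the analogue of Vl\u{a}du\c{t}'s count of elliptic curves over a finite field with prescribed group structure referred to in the abstract. The Weil pairing forces $\mathcal{S}_k(p)=\emptyset$ unless $p\equiv1\pmod k$, and when $p\equiv1\pmod k$ the condition on $(a_0,b_0)$ says exactly that $\mathrm{Frob}_p$ acts as the identity on $E_{(a_0,b_0)}[k]$. I would count such pairs via the $\mathbb{F}_p$-points of the affine surface parametrizing (elliptic curve, ordered basis of its $k$-torsion), which for $p\equiv1\pmod k$ breaks up into $\phi(k)$ geometrically irreducible components indexed by the value of the Weil pairing, each having $p^2+O(k^{O(1)}p^{3/2})$ points by Deligne's bounds; dividing by $|\mathrm{GL}_2(\mathbb{Z}/k\mathbb{Z})|$ (the number of bases) and using $|\mathrm{GL}_2(\mathbb{Z}/k\mathbb{Z})|=\phi(k)\,|\mathrm{SL}_2(\mathbb{Z}/k\mathbb{Z})|$ yields, uniformly for $p\equiv1\pmod k$,
\[
N_k(p)=\frac{p^2}{|\mathrm{SL}_2(\mathbb{Z}/k\mathbb{Z})|}+O\!\left(k^{O(1)}p^{3/2}\right),
\]
together with the crude bound $N_k(p)\ll p^2(\log p)/k^2+p^{3/2}\log p$ coming from the class-number estimate for the number of curves with a given Frobenius trace. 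The identity $\phi(k)\,|\mathrm{SL}_2(\mathbb{Z}/k\mathbb{Z})|=|\mathrm{GL}_2(\mathbb{Z}/k\mathbb{Z})|$ is exactly what will make the average reproduce the cyclicity constant.

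Granting that the inner sum over $(a,b)$ equals $(4AB/p^2)N_k(p)$ up to an error whose total contribution is $O(\pi(x)/(\log x)^K)$ --- the content of the last paragraph --- the main term is assembled as follows. I divide by $4AB$, truncate the $k$-sum at $K_0=(\log x)^{c_1}$ for a suitable $c_1=c_1(K)$ (the tail $K_0<k\le\sqrt{x}+1$ is nonnegative and is absorbed into the error via the same machinery together with the crude bound on $N_k(p)$ and $\sum_{k>K_0}k^{-2}\ll K_0^{-1}$), and write $\#\{p\le x:p\equiv1\pmod k\}=\pi(x)/\phi(k)+E(x;k,1)$, bounding $E(x;k,1)$ on the truncated range by the Siegel--Walfisz theorem. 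The leading contribution is then
\[
\pi(x)\sum_{k\ge1}\frac{\mu(k)}{\phi(k)\,|\mathrm{SL}_2(\mathbb{Z}/k\mathbb{Z})|}=\pi(x)\sum_{k\ge1}\frac{\mu(k)}{|\mathrm{GL}_2(\mathbb{Z}/k\mathbb{Z})|}=\pi(x)\prod_{\ell}\!\left(1-\frac{1}{|\mathrm{GL}_2(\mathbb{F}_\ell)|}\right)=C_{cyc}\,\pi(x),
\]
using $|\mathrm{GL}_2(\mathbb{F}_\ell)|=\ell(\ell-1)(\ell^2-1)$ and the multiplicativity of $|\mathrm{GL}_2(\mathbb{Z}/k\mathbb{Z})|$ in $k$; the $O(k^{O(1)}p^{3/2})$ term in $N_k(p)$ contributes $O(k^{O(1)}p^{-1/2})$ per pair $(k,p)$, hence $O(x^{1/2}(\log x)^{O(1)})=o(\pi(x))$ over the truncated range.

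The genuine difficulty --- and the step I expect to be the main obstacle --- is the inner sum over $(a,b)$ for primes $p>2\min(A,B)$. Since $A$ and $B$ may be as small as $x^{2\epsilon}$, most primes $p\le x$ lie in this ``thin box'' range, where one cannot approximate the number of lifts of a residue class by $4AB/p^2<1$. The plan is to detect $(a,b)\bmod p\in\mathcal{S}_k(p)$ by additive characters modulo $p$ (writing $e_p(t)=e^{2\pi it/p}$): the frequency-zero term produces the main term $(4AB/p^2)N_k(p)$, and the remaining terms pair the incomplete geometric sums $\sum_{|a|\le A}e_p(ua)$, $\sum_{|b|\le B}e_p(vb)$ against the complete exponential sums $\sum_{(a_0,b_0)\in\mathcal{S}_k(p)}e_p(ua_0+vb_0)$ with $(u,v)\ne(0,0)$, which enjoy square-root cancellation, $\ll k^{O(1)}p^{3/2}$, by the Weil/Deligne bounds applied to the surface above (equivalently, after fibering $\mathcal{S}_k(p)$ over the $j$-line, to Weil's bound for one-variable sums). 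Summing the resulting bilinear expression over $p\le x$ --- crucially exploiting cancellation in $p$ via the large sieve inequality rather than the triangle inequality --- is where the hypotheses $AB\ge x^{1+\epsilon}$ and $A,B\le x^{1-\epsilon}$ enter, and is what keeps the total error at $O(\pi(x)/(\log x)^K)$. For the complementary range $p\le2\min(A,B)$ a residue class has $4AB/p^2+O(p/\min(A,B))$ lifts and the accumulated error is $O(\min(A,B))=O(x^{1-\epsilon})$, which is negligible; this thin-box exponential-sum input is the genuinely new feature, not needed in the fat-box regime $A,B>x^{1+\epsilon}$ of the earlier averaged results, where every residue class modulo $p$ already has many lifts in the box. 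Combining it with the main-term computation gives the stated asymptotic, with implied constant depending only on $\epsilon$ and $K$.
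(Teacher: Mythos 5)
This statement is not proved in the paper at all --- it is quoted verbatim from Banks--Shparlinski \cite[Theorem 17]{banksshparlinski}, so there is no internal proof to compare against; the closest analogue is the paper's Section 4, which follows the Banks--Shparlinski template for the one-parameter torsion families. Judged on its own terms, your main-term skeleton is sound: the M\"obius detection of cyclicity via full $k$-torsion, the Hasse truncation $k\le \sqrt{x}+1$, the Weil-pairing constraint $p\equiv 1 \pmod{k}$, the local density $1/\#\mathrm{SL}_2(\mathbb{Z}/k\mathbb{Z})$ (consistent with Howe's $\hat{w}(k,k)$), Siegel--Walfisz on $k\le(\log x)^{c_1}$, and the identity $\sum_k \mu(k)/\#\mathrm{GL}_2(\mathbb{Z}/k\mathbb{Z})=\prod_\ell\bigl(1-\tfrac{1}{\ell(\ell-1)(\ell^2-1)}\bigr)$ all check out.

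The genuine gap is exactly the step you flag as ``the main obstacle'': the thin-box equidistribution. With your own bounds it does not close. Completion by additive characters with the fibered Weil bound $|T(u,v)|\ll k^{O(1)}p^{3/2}$ gives, per prime and per $k$, an error of size about $k^{O(1)}p^{3/2}(\log p)^2$ from the $(u,v)$ both nonzero range and about $(A+B)k^{O(1)}p^{1/2}\log p$ from the cross terms; summed trivially over $p\le x$ this is $\asymp x^{5/2+o(1)}$ plus $(A+B)x^{3/2+o(1)}$, whereas the target is $O\bigl(AB\,x/(\log x)^{K+1}\bigr)$, and $AB$ may be as small as $x^{1+\epsilon}$ --- so the bound fails whenever $\epsilon<1/2$ (and the cross terms already fail for $\epsilon<1/4$). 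Your proposed rescue, ``exploiting cancellation in $p$ via the large sieve,'' is an aspiration, not an argument: no bilinear structure in $p$ is exhibited, and it is precisely this point that constitutes the theorem's difficulty. The same issue infects the tail $k>K_0$, where you cannot invoke the main-term approximation $(4AB/p^2)N_k(p)$ in a thin box. For the record, the actual proof in \cite{banksshparlinski} avoids additive completion over the $(a,b)$-plane altogether: it counts parameters landing in a prescribed set of $\mathbb{F}_p$-isomorphism classes using the twist structure $(a,b)\sim(at^4,bt^6)$ and multiplicative character sum estimates (this is where $AB\ge x^{1+\epsilon}$, $A,B\le x^{1-\epsilon}$ enter), combines this with Vl\u{a}du\c{t}'s fixed-field cyclicity count, and then averages the resulting multiplicative function of $p-1$ by a mean-value theorem for multiplicative functions at shifted primes --- the same architecture the present paper adapts in Section 4, where taking $A>x^{1+\epsilon}$ sidesteps the thin-box problem entirely.
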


The average asymptotics described above provide strong evidence for the conjectures in each case. In particular, the constants $\mathfrak{f}_E$ and $C_{cyc}$ and $C_{E, r}$ and $D_r$ are clearly closely related. We view $C_{cyc}$ and $D_r$ as idealized constants where the variation of individual curves are averaged out. Furthermore, Jones \cite{jonesaveragesof} proved that the average of the constants predicted by the respective conjectures is indeed the constant seen in the average results. 

Jones proof leveraged the fact (also due to Jones) that almost all elliptic curves are what are known as \emph{Serre curves} \cite{jonesserrecurves}. However, there are interesting families which consist entirely of elliptic curves that are \emph{not} Serre curves. These  curves are essentially invisible in the prior average results cited above; it is therefore of interest to study the averages of the functions $\pi_E^t$ and $\pi_E^{cyc}$ as $E$ varies over such a family. 

One class of such families are the torsion families -- the family of elliptic curves some specified torsion structure.
It follows from Mazur's theorem that if $E$ has a point of order $m\geq 2$, then $m\in \{2, 3, 4, 5, 6, 7, 8, 9, 10, 12\}$. The elliptic curves $E/\mathbb{Q}$ that have a rational point of order $m\geq 4$ lie in a one-parameter family; these were obtained by Kubert \cite{MR523268} and are given in Table \ref{tab:table-parameterizations}.

\FloatBarrier
\begin{table}[h]
\centering
\begin{tabular}{|c|c| }
  \hline
$m$ & $E_m(a)$  \\
\hline
$4$ &  $y^2 + x y -a y = x^{3} -a x^{2} $        \\
\hline
$5$ &  $y^2 + \left(  1-a\right) x y  -a y = x^{3} -a x^{2} $         \\
\hline
$6$ & $y^2 + \left(  1-a\right) x y - \left(a^{2} + a\right) y = x^{3} - \left(a^{2} + a\right) x^{2} $          \\
\hline
$7$ &  $y^2 + \left(1+ a-a^{2}  \right) x y + \left(a^{2}-a^{3} \right) y = x^{3} + \left(a^{2}-a^{3}  \right) x^{2}$        \\
\hline
$8$ &  $y^2 + \left(\frac{-2a^{2} + 4a - 1}{a}\right) x y + \left(-2a^{2} + 3a - 1\right) y = x^{3} + \left(-2a^{2} + 3a - 1\right) x^{2}$      \\
\hline
$9$ &  $y^2 + \left(1+ a^{2}-a^{3}   \right) x y + \left(a^{2}- 2a^{3}+ 2a^{4}-a^{5} \right) y = x^{3} + \left(a^{2}- 2a^{3}+ 2a^{4}-a^{5} \right) x^{2} $        \\
\hline
$10$ & $\begin{aligned}[t]y^2 +& \left(\frac{2a^{3} - 2a^{2} - 2a + 1}{a^{2} - 3a + 1}\right) x y + \left(\frac{-2a^{5} + 3a^{4} - a^{3}}{a^{4} - 6a^{3} + 11a^{2} - 6a + 1}\right) y \\ =& x^{3} + \left(\frac{-2a^{5} + 3a^{4} - a^{3}}{a^{4} - 6a^{3} + 11a^{2} - 6a + 1}\right) x^{2} \end{aligned}$ \\
\hline
$12$ & $\begin{aligned}[t]
y^2 +& \left(\frac{6a^{4} - 8a^{3} + 2a^{2} + 2a - 1}{a^{3} - 3a^{2} + 3a - 1}\right) x y + \left(\frac{-12a^{6} + 30a^{5} - 34a^{4} + 21a^{3} - 7a^{2} + a}{a^{4} - 4a^{3} + 6a^{2} - 4a + 1}\right) y \\ &= x^{3} + \left(\frac{-12a^{6} + 30a^{5} - 34a^{4} + 21a^{3} - 7a^{2} + a}{a^{4} - 4a^{3} + 6a^{2} - 4a + 1}\right) x^{2}
\end{aligned}$ \\
\hline
\end{tabular}
\caption{Parameterizations for elliptic curves with $m$-torsion. \label{tab:table-parameterizations}}
\end{table}
\FloatBarrier
The discriminant $\Delta_m(a)$ of the curve with $m$-torsion given above is 
\begin{eqnarray*}
\Delta_4(a) &=& (16a + 1) a^{4} \\
\Delta_5(a) &=& a^{5}  (a^{2} - 11a - 1)\\
\Delta_6(a) &=& (9a + 1)  (a + 1)^{3}  a^{6} \\
\Delta_7(a) &=& (a - 1)^{7}  a^{7}  (a^{3} - 8a^{2} + 5a + 1) \\
\Delta_8(a)&=& a^{-4}  (2a - 1)^{4}  (a - 1)^{8}  (8a^{2} - 8a + 1)\\
\Delta_9(a)&=&(a - 1)^9  a^9  (a^2 - a + 1)^3  (a^3 - 6a^2 + 3a + 1)\\
\Delta_{10}(a) &=& (2a - 1)^{5}  (a - 1)^{10}  a^{10} (a^{2}-3a + 1)^{-10} (4a^{2} - 2a - 1) \\
\Delta_{12}(a)&=& (a - 1)^{-24}  (2a - 1)^{6}  a^{12}  (6a^{2} - 6a + 1)  (2a^{2} - 2a + 1)^{3}  (3a^{2} - 3a + 1)^{4}.
\end{eqnarray*}

James \cite{jamesthreetorsion} gave the first results in this direction when he obtained an asymptotic for the Lang-Trotter conjecture on average over the family of curves with a point of order 3. Battista, Bayless, Ivnaov, and James \cite{battistabayless} extended this investigation over the family of elliptic curves which possess a $\mathbb{Q}$-rational point of order $m$ for $m=5, 7$ or $9$. They prove

\begin{theorem}\cite[Theorem 3]{battistabayless}  Let $E_m(a)$ be the parameterization of elliptic curves which have a rational point of order $m\in \{5, 7, 9\}$. Then for any $c>0$, we have
\begin{eqnarray*}
\frac{1}{\mathcal{C}(N)}\sideset{}{'}\sum_{|a|\leq N}\pi_{E_m(a)}^r(X)=\frac{2}{\pi}C_{r, m}\pi_{1/2}(X)+O\left(\frac{X^{3/2}}{N}+\frac{\sqrt{X}}{\log^cX}\right),
\end{eqnarray*}  
where $\sum'$ represents the sum over non-singular curves, $\mathcal{C}(N)$ represents the number of curves in the sum, and
\begin{eqnarray*}
C_{r,m}=C_r(m)\prod_{\substack{\ell\nmid m\\ \ell\nmid r}}\frac{\ell(\ell^2-\ell-1)}{(\ell+1)(\ell-1)^2}\prod_{\substack{\ell\nmid m\\ \ell\mid r}}\frac{\ell^2}{\ell^2-1}, 
\end{eqnarray*}
where
\begin{eqnarray*}
C_r(m)= \begin{cases} 	5/4 &\text{ if } m=5 \text{ and }r\equiv 0,3,4 \pmod{5},\\
7/6 &\text{ if } m=7 \text{ and }r\equiv 0,,3,4,5,6 \pmod{7},\\
3/2 &\text{ if } m=9 \text{ and }r\equiv 0,3,6 \pmod{9}.
						\end{cases}
\end{eqnarray*}

\end{theorem}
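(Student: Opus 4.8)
\medskip
\noindent\textbf{Proof proposal.} The plan is to transport the method behind the average Lang--Trotter results of Fouvry--Murty and David--Pappalardi from the two-parameter family to the one-parameter Kubert family $E_m(a)$; the essential new ingredient is a count of the mod-$p$ fibres of the parameterization via the modular curve $X_1(m)$, which has genus zero for $m\in\{5,7,9\}$. I would first interchange the order of summation and reduce modulo $p$: setting, for $p\nmid m$,
\begin{align*}
T_m(p,r)=\#\{a\in\mathbb{F}_p:\ \Delta_m(a)\not\equiv 0\pmod p\ \text{and}\ a_p(E_m(a))=r\},
\end{align*}
and using that $E_m(a)\bmod p$ depends only on $a\bmod p$, one gets
\begin{align*}
\sideset{}{'}\sum_{|a|\le N}\pi_{E_m(a)}^r(X)=\sum_{\substack{p\le X\\ p\nmid m}}\frac{2N}{p}\,T_m(p,r)+O\Big(\sum_{p\le X}T_m(p,r)+N\log N\Big),
\end{align*}
the first error term being the discrepancy between $\#\{a:|a|\le N,\ a\equiv a_0\pmod p\}$ and $2N/p$ summed over the at most $T_m(p,r)$ relevant residues $a_0$, the second absorbing the $O_m(1)$ singular members and the bad primes (bounded using the explicit $\Delta_m(a)$ listed above). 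Since the fibre count below gives $T_m(p,r)\ll_m\sqrt p\log p$, the first error is $\ll X^{3/2}$, and dividing by $\mathcal C(N)=2N+O_m(1)$ produces the stated $O(X^{3/2}/N)$.

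The crux is the evaluation of $T_m(p,r)$. For $m\in\{5,7,9\}$ the curve $Y_1(m)$ is a genus-zero fine moduli space on which $a$ serves as a coordinate, so up to $O_m(1)$ exceptional values (over cusps or over $\Delta_m(a)=0$) the assignment $a\mapsto(E_m(a),P_m(a))$ is a bijection from $\mathbb{F}_p$ onto the set of $\mathbb{F}_p$-isomorphism classes of pairs $(E,P)$ with $E/\mathbb{F}_p$ elliptic and $P\in E(\mathbb{F}_p)$ of order exactly $m$; hence $T_m(p,r)$ equals, up to $O_m(1)$, the number of such pairs with $a_p(E)=r$. Now $a_p(E)=r$ fixes $\#E(\mathbb{F}_p)=p+1-r$ throughout the $\mathbb{F}_p$-isogeny class, so $m\mid\#E(\mathbb{F}_p)$ holds either for all curves of that trace or for none; moreover the Weil pairing forces the noncyclic part of $E(\mathbb{F}_p)$ to divide $p-1$, so whenever $p\not\equiv 1\pmod\ell$ for every prime $\ell\mid m$, the $\ell$-parts of $E(\mathbb{F}_p)$ are cyclic and the number of points of order exactly $m$ is the constant $\phi(m)$ once $m\mid p+1-r$, and is $0$ otherwise. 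Feeding this uniformity into Deuring's theorem, in the quantitative form $\sum_{E/\mathbb{F}_p,\,a_p(E)=r}1/\#\Aut(E)=H(4p-r^2)$ with $H$ the Kronecker--Hurwitz class number, I obtain $T_m(p,r)=\phi(m)\,H(4p-r^2)+O_m(1)$ when $p\equiv r-1\pmod m$, and $T_m(p,r)=O_m(1)$ otherwise, valid precisely for those $r$ with $r\not\equiv 1,2\pmod\ell$ for each $\ell\mid m$ --- the conditions ensuring that primes $p\equiv r-1\pmod m$ exist and have cyclic $\ell$-parts --- which is exactly the restriction of $r$ to $\{0,3,4\}\bmod 5$, $\{0,3,4,5,6\}\bmod 7$, and $\{0,3,6\}\bmod 9$ appearing in the statement.

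Summing over $p\le X$ and dividing by $\mathcal C(N)$ then reduces the theorem to an asymptotic for $\phi(m)\sum_{p\le X,\ p\equiv r-1\pmod m}H(4p-r^2)/p$, the average of class numbers over primes that underlies every average Lang--Trotter result. By the class-number formula $H(D)$ is $\tfrac{\sqrt D}{\pi}$ times a quantity built from $L(1,\chi_D)$ and local Euler factors, and an equidistribution estimate for $\sum_{p\le X,\,p\equiv a\pmod q}H(4p-r^2)$ with a logarithmic saving --- the trace-formula input of Fouvry--Murty, here with an arithmetic-progression twist --- yields a main term $(\mathrm{const})\cdot\pi_{1/2}(X)$ with error $O(\sqrt X/\log^c X)$. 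The constant is then a bookkeeping of $\ell$-adic densities. For $\ell\nmid m$ nothing changes from the David--Pappalardi constant, giving the two products over $\ell\nmid m$ in the statement and the archimedean factor $\tfrac2\pi$. For $\ell\mid m$ the congruence $p\equiv r-1\pmod\ell$ forces $4p-r^2\equiv-(r-2)^2\pmod\ell$, a nonzero square precisely because $r\not\equiv 2\pmod\ell$; hence $\ell$ splits in the relevant imaginary quadratic field and the local factor of $H(4p-r^2)$ at $\ell$ is $(1-1/\ell)^{-1}$, which, combined with the weight $\phi(\ell)$ and the density $1/(\ell-1)$ of the progression $p\equiv r-1\pmod\ell$, contributes $\tfrac{\ell}{\ell-1}$; the analogous computation with $\ell=3$ and the stronger congruence $9\mid p+1-r$ contributes $\tfrac32$ when $m=9$. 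The result is $C_r(m)=5/4,\,7/6,\,3/2$, independent of $r$ on the admissible residues, and the whole constant assembles into $\tfrac2\pi C_{r,m}$.

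I expect the principal obstacle to be this analytic step: proving equidistribution of $H(4p-r^2)$ over primes $p\le X$ in a fixed residue class modulo $m$ with error $O(\sqrt X/\log^c X)$ for every $c>0$, and with enough uniformity to also absorb the higher-$\ell$-power congruences relegated to lower-order terms above. This is the technical heart taken over from Fouvry--Murty, and pushing it through with the arithmetic progression in place is where the real difficulty lies; by contrast the modular-curve bookkeeping --- the bijection with $Y_1(m)(\mathbb{F}_p)$, the cyclicity of the $\ell$-parts, and the ensuing identification of the weight $\phi(m)$, the admissible residues of $r$, and the local factors --- is elementary, yet it is exactly what manufactures the constants $5/4,\,7/6,\,3/2$ and the residue lists in the statement.
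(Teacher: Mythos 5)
This statement is quoted from Battista--Bayless--Ivanov--James \cite{battistabayless} and is not proved anywhere in the paper, so there is no internal proof to compare yours against. Your outline does reconstruct the strategy of that literature (and of James's three-torsion paper): reduce modulo $p$, count parameters $a\in\mathbb{F}_p$ with $a_p(E_m(a))=r$ by using $a$ as a coordinate on $Y_1(m)$, so that a generic isomorphism class (with $\#\Aut(E)=2$) and cyclic rational $m$-torsion accounts for $\varphi(m)/2$ parameter values (exactly the bookkeeping this paper records in Section 3 and Table \ref{tab:table-parms}); convert the curve count at fixed trace into the Hurwitz--Kronecker class number $H(4p-r^2)$ via Deuring; and then average class numbers over primes $p\equiv r-1\pmod{m}$. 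Your congruence analysis is sound: the exclusion of $r\equiv 1,2\pmod{\ell}$ for $\ell\mid m$ (no admissible primes, resp.\ possible non-cyclic $\ell$-part when $p\equiv 1\pmod{\ell}$) and the resulting values $C_r(m)=\ell/(\ell-1)$, i.e.\ $5/4$, $7/6$, $3/2$, match the statement.

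As a proof, however, there is a genuine gap, which you yourself flag: the asymptotic for $\sum_{p\le X,\ p\equiv r-1\pmod{m}}H(4p-r^2)$ with a main term of size $\pi_{1/2}(X)$ and error $O(\sqrt{X}/\log^c X)$ \emph{is} the analytic content of the theorem, and invoking it as ``the Fouvry--Murty input with an arithmetic-progression twist'' leaves the main work undone; in \cite{battistabayless} this is carried out explicitly via class-number averages over progressions, and without it your argument is a reduction, not a proof. Two smaller corrections: $4p-r^2\equiv-(r-2)^2\pmod{\ell}$ is not in general a square modulo $\ell$ (take $\ell=7$); the relevant fact is that $r^2-4p\equiv(r-2)^2$ is a nonzero square, so $\ell$ splits in $\mathbb{Q}(\sqrt{r^2-4p})$ and the local factor is $(1-1/\ell)^{-1}$. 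Also, your phrase ``bijection onto pairs $(E,P)$'' blurs a factor of $2$: points of $Y_1(m)$ correspond to pairs up to isomorphism, so a generic curve yields $\varphi(m)/2$ parameters, which together with the unweighted count $2H(4p-r^2)+O(1)$ of curves of trace $r$ gives your $\varphi(m)H(4p-r^2)$; stated as a bijection with pairs it would read as $2\varphi(m)H(4p-r^2)$ and throw off the constant.
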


The main result in  this paper is to establish an average cyclicity result over torsion families of elliptic curves. More precisely, we prove 

\begin{theorem}\label{mainresult}
Let $\epsilon> 0$,  $A>x^{1+\epsilon}$. Let $E_m(a)$ denote the parameterization of elliptic curves over $\mathbb{Q}$ which have a rational $m$-torsion point for $m\neq 2, 3$. Then
\[\frac{1}{2A}\sideset{}{'}\sum_{\mid a\mid \leq A} \pi_{E_m(a)}^{cyc}(x) = C_m\prod_{\ell\nmid m}\left(1-\frac{1}{\ell(\ell-1)(\ell^2-1)}\right)\pi(x)  +O\left(\frac{x^{1-\epsilon}}{\log x}\right)
\]
where $C_m$ summarized in Table \ref{tab:table-cm}.
\begin{table}[h]
\centering
\begin{tabular}{| c| c| c| c| c| c| c| c| c| }
\hline
  $m$&$4$ & $5$  & $6$  & $7$  & $8$  & $9$  & $10$ &$12$ \\
\hline
 $C_m$&$\frac{1}{2}$ & $\frac{19}{20}$ &  $\frac{5}{12}$ & $\frac{41}{42}$ &  $\frac{1}{2}$ & $\frac{5}{6}$ & $\frac{19}{40}$ & $\frac{5}{12}$ \\
\hline
\end{tabular} 
\caption{\label{tab:table-cm}}
\end{table}
\end{theorem}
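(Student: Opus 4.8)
The plan is to run a Möbius sieve over the order of the torsion part of $E_p(\mathbb{F}_p)$, interchange the sum over $a$ with the sums over the prime $p$ and the sieve parameter $d$, and reduce everything to counting, for each $p$, the residue classes $a\bmod p$ for which $E_m(a)_p$ has all of $E_m(a)_p[d]$ rational; the asymptotic evaluation of that count is the analogue of Vl\u{a}du\c{t}'s theorem promised in the abstract.

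First I would record the sieve identity. For $p$ of good reduction for $E=E_m(a)$, write $E_p(\mathbb{F}_p)\cong\mathbb{Z}/n_1\times\mathbb{Z}/n_2$ with $n_2\mid n_1$; then $E_p[d]\subseteq E_p(\mathbb{F}_p)$ is equivalent to $d\mid n_2$, so $\sum_{d\ge1}\mu(d)\,\mathbf{1}[E_p[d]\subseteq E_p(\mathbb{F}_p)]=\sum_{d\mid n_2}\mu(d)=\mathbf{1}[n_2=1]$ is exactly the cyclicity indicator. Hence
\[\frac{1}{2A}\sideset{}{'}\sum_{|a|\le A}\pi_{E_m(a)}^{cyc}(x)=\sum_{p\le x}\ \sum_{d\ge1}\mu(d)\ \frac{1}{2A}\sideset{}{'}\sum_{|a|\le A}\mathbf{1}[E_m(a)_p[d]\subseteq E_m(a)_p(\mathbb{F}_p)].\]
Since $E_p[d]\subseteq E_p(\mathbb{F}_p)$ forces $d^2\mid\#E_p(\mathbb{F}_p)\le(\sqrt p+1)^2$, only $d\le\sqrt x+1$ contributes (and supersingular primes, which contribute negligibly on average, can be discarded); I would split at a truncation parameter $L=x^{\delta}$ with $\delta$ small in terms of $\epsilon$, treating $d\le L$ as the main range and estimating the tail $L<d\le\sqrt x+1$ separately. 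In the main range the inner condition depends only on $a\bmod p$ (outside the $O_m(1)$ residues giving singular reduction), and since the number of admissible $a\in[-A,A]$ in a fixed class mod $p$ is $\tfrac{2A}{p}+O(1)$, the inner average equals $\tfrac1p N_m(p,d)+O\!\bigl(\tfrac{N_m(p,d)}{A}\bigr)$ with $N_m(p,d):=\#\{a\in\mathbb{F}_p:E_m(a)\text{ nonsingular},\ E_m(a)[d]\subseteq E_m(a)(\mathbb{F}_p)\}$.

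The heart of the matter is the Vl\u{a}du\c{t}-type estimate $N_m(p,d)=g_m(d)\,p\cdot\mathbf{1}[d\mid p-1]+O_m(\sqrt p\,d^{\nu})$ for a fixed exponent $\nu$ and a multiplicative function $g_m$. Every curve in the family carries a rational point of order $m$, so $\bigl(E_m(a),\text{its }m\text{-torsion point}\bigr)$ is a point of the genus-zero modular curve $Y_1(m)\cong\mathbb{P}^1$; imposing in addition that $E[d]$ be rational cuts out inside it a curve over $\mathbb{F}_p$ whose $\mathbb{F}_p$-point count is controlled by the Weil bound, which is what produces the error term. The value of $g_m(d)$ is read off from the mod-$d$ image of Galois: for fixed $p$, as $a$ varies the Frobenius at $p$ of $E_m(a)$ is essentially equidistributed over the determinant-$p$ coset of that image, and $E_m(a)[d]\subseteq E_m(a)(\mathbb{F}_p)$ is the event $\mathrm{Frob}_p=I$, which needs $d\mid p-1$ and then occurs with frequency $1/\#\{\text{elements of the image of determinant }1\}$. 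For $\ell\nmid m$ the one-parameter family is generic at $\ell$, the image is all of $\mathrm{GL}_2(\mathbb{F}_\ell)$, and $g_m(\ell)=1/|\mathrm{SL}_2(\mathbb{F}_\ell)|=\tfrac{1}{\ell(\ell^2-1)}$; for $\ell\mid m$ the forced torsion point confines the image to the index-$(\ell+1)$ subgroup $\{(\begin{smallmatrix}1&*\\0&*\end{smallmatrix})\}$ of order $\ell(\ell-1)$, whence $g_m(\ell)=\tfrac1\ell$, and because $\mu$ annihilates the prime powers $\ell^e\mid m$ with $e\ge2$, no finer level structure at primes dividing $m$ ever enters.

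Finally I would reassemble. Substituting the main term gives $\sum_{d\le L}\mu(d)g_m(d)\,\#\{p\le x:d\mid p-1\}$, and inserting $\#\{p\le x:d\mid p-1\}=\pi(x)/\phi(d)+(\text{error})$, valid with an acceptable error in the range $d\le L$ at hand (Siegel--Walfisz, or Bombieri--Vinogradov after interchanging), yields $\pi(x)\sum_{d\le L}\tfrac{\mu(d)g_m(d)}{\phi(d)}=\pi(x)\prod_{\ell\le L}\bigl(1-\tfrac{g_m(\ell)}{\ell-1}\bigr)+(\text{error})$; letting $L\to\infty$ the Euler product becomes $\prod_{\ell\mid m}\bigl(1-\tfrac{1}{\ell(\ell-1)}\bigr)\prod_{\ell\nmid m}\bigl(1-\tfrac{1}{\ell(\ell-1)(\ell^2-1)}\bigr)$, and evaluating the first (finite) product for each $m\in\{4,5,\dots,12\}$ produces $C_m$ as in Table~\ref{tab:table-cm}. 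The step I expect to cost the most work is the unconditional treatment of the tail $L<d\le\sqrt x+1$: there the modular-curve error $\sqrt p\,d^{\nu}$ swamps the main term, so one must show directly --- as in the work of Banks and Shparlinski --- that for few primes $p\le x$ and few $a$ does $E_m(a)_p(\mathbb{F}_p)$ contain $(\mathbb{Z}/d)^2$ with $d$ large, exploiting $d^2\mid\#E_m(a)_p(\mathbb{F}_p)$ together with the averaging over $a$; the admissible size of $L$, hence the final error term $O(x^{1-\epsilon}/\log x)$ and the hypothesis $A>x^{1+\epsilon}$, is dictated by balancing this tail against the discretization error $O(N_m(p,d)/A)$ and the prime-in-progressions error.
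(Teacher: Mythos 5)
Your outline is essentially correct and the constants come out right (your finite product $\prod_{\ell\mid m}\bigl(1-\tfrac{1}{\ell(\ell-1)}\bigr)$ agrees with the paper's $C_m$ for every $m$ in the table), but it takes a genuinely different route from the paper. You sieve globally: write the cyclicity indicator as $\sum_d \mu(d)\mathbf{1}[E_p[d]\subseteq E_p(\mathbb{F}_p)]$, reduce to a parameter count $N_m(p,d)$ which you propose to estimate by Weil bounds on the modular curve obtained by adding full level-$d$ structure to $Y_1(m)$, then sum over $p$ with $d\mid p-1$ via Siegel--Walfisz/Bombieri--Vinogradov and control a tail of large $d$. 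The paper never introduces a global sieve or primes in progressions: it first proves the fixed-field count (Theorem \ref{fixedfield}) for the weighted number of $E/\mathbb{F}_q$ that are cyclic and carry a point of order $m$, by inclusion--exclusion over divisors of $q-1$ using Howe's counts $W(a,b)$ and the Castryck--Hubrechts count $T_q(m)$, together with the identity $r_q'(m)=\sum_{d\mid m}\tilde w(d,m)$; it then converts isomorphism classes to parameter values via the $\varphi(m)/2$-to-one correspondence coming from $X_1(m)$ (Section 3), and finally evaluates $\sum_{p\le x}F(p-1)$ for the resulting multiplicative function $F$ by Indlekofer's mean-value theorem, split according to $p\bmod \ell_0$. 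What the paper's organization buys is exactly the two steps you flag as hardest: the sieve moduli are divisors of $p-1$ inside a fixed-field count (only $O(p^{\varepsilon})$ of them, with uniform $O(\sqrt p)$ errors from Howe), so there is no accumulating per-$d$ Weil error and no separate tail argument; and the ``Vl\u{a}du\c{t} analogue'' is assembled from existing counts rather than from a new point count on level-$md$ modular curves. Conversely, your route avoids the isomorphism-class bookkeeping (automorphism weights, Table \ref{tab:table-parms}) by working with parameters throughout. Two cautions if you pursue your version: the tail cannot be handled by the $O(\sqrt p\,d^{\nu})$ estimate alone --- you must first use the Weil pairing to restrict to $d\mid p-1$ and then invoke Howe-type or class-number bounds for full $d$-torsion with $d>L$ (note that Banks--Shparlinski themselves avoid this issue by using Vl\u{a}du\c{t}'s fixed-field count, just as this paper does, so the citation is slightly misdirected); and the stabilizer of a point of order $\ell$ in $\mathrm{GL}_2(\mathbb{F}_\ell)$ has index $\ell^2-1$, not $\ell+1$ --- harmless here, since only the count of determinant-one elements enters and $g_m(\ell)=1/\ell$ is correct.
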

\begin{remark} The effect of the presence of $m$-torsion is apparent; we interpret the constant $C_{cyc}$ as a product of local factors, each of which is the probability that $E_p$ has cyclic $\ell$-torsion. The presence of a point of order $m$ should have some influence on these probabilities for $\ell\mid m$. Indeed, a curve with a point of order $\ell$ over $\mathbb{Q}$ need only acquire a single linearly independent point of order $\ell$ for cyclicity of the reduction to fail. Compared to the generic case of a curve without a point of order $\ell$, we expect it to be much less likely that the reductions of these curves are cyclic. Our result quantifies this heuristic reasoning.
\end{remark}

\begin{remark}
The family of curves with a point of order $m$ for $m=4$ or $8$ includes curves with full $2$-torsion. Since the torsion points of $E(\mathbb{Q})$ injects into $E_p(\mathbb{F}_p)$, the reductions of these curves never have cyclic group of $\mathbb{F}_p$-rational points. It would be interesting to obtain a similar result where the average is taken only over curves with cyclic 2-torsion. Furthermore, we note that we have not treated all torsion families; instead we have focused on curves in one-parameter families. The two-parameter families (curves with 2-torsion or 3-torsion points) will be the subject of future work.
\end{remark}

\begin{remark} The proof broadly follows the steps of \cite{banksshparlinski}; however, we note that an important feature of that work was the use of character sums to reduce the size of the family over which the average is taken. The torsion families over which we average in the current paper are each a one-parameter family, and it is unclear how to adapt the character sum estimates to this context. Consequently, we average over curves in a larger family than we would prefer. Reducing the size of the family will be the subject of subsequent work. 
\end{remark}

 A key ingredient for the argument in \cite{banksshparlinski} was the fixed-field count of Vl\u{a}du\c{t} \cite{vladut} which, for a finite field of $q$ elements, estimates the number of $E/\mathbb{F}_q$ which have cyclic group structure. Our proof requires the following analogous fixed field count which takes into account the additional torsion data.

It is frequently convenient to express counts of elliptic curves over finite fields as weighted cardinalities where we weight each curve by the size of its automorphism group.  We indicate weighted cardinalities by $\#'$.

For a prime number $\ell$, denote by $v_\ell(n)$ the $\ell$-adic valuation of $n$. Concretely, any positive integer $n$ can be written $n=\ell^em$ where $\ell\nmid m$. Then $v_\ell(n)=e$.
 
\begin{theorem}\label{fixedfield} Denote by \[
C_q(m)=\{E/\mathbb{F}_q: E(\mathbb{F}_q) \text{ is cyclic and contains a point of order $m$}\}/_{\cong\mathbb{F}_q}.\]
Then
\[\#'C_q(m)=q\hspace{-16pt}\prod_{\substack{\ell|m\\ q\equiv 1\pmod*{\ell}}}\frac{1}{\ell^{v_\ell(m)}}\prod_{\substack{\ell|m\\ q\not\equiv 1\pmod*{\ell}}}\frac{1}{\varphi(\ell^{v_\ell(m)})}\prod_{\substack{\ell\nmid m\\ q\equiv 1\pmod*{\ell} }}\left(1-\frac{1}{\ell(\ell^2-1)}\right)+O\left(q^{1/2}\right).\]

\end{theorem}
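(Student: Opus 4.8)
The plan is to follow the strategy behind Vl\u{a}du\c{t}'s count, carrying the extra torsion data through the modular curves that classify the relevant level structures. Write $E(\mathbb{F}_q)\cong\mathbb{Z}/n_1\mathbb{Z}\times\mathbb{Z}/n_2\mathbb{Z}$ with $n_2\mid n_1$; then $E(\mathbb{F}_q)$ is cyclic iff $n_2=1$, and since $E[d]\subseteq E(\mathbb{F}_q)\iff d\mid n_2$, the identity $\sum_{d\mid n_2}\mu(d)=\mathbf{1}[n_2=1]$ gives
\[\#'C_q(m)=\sum_{\substack{d\ge 1\\ d\text{ squarefree}}}\mu(d)\,T_d(m),\qquad T_d(m):=\#'\{E/\mathbb{F}_q:\ E[d]\subseteq E(\mathbb{F}_q)\ \text{and}\ E\text{ has a point of order }m\}.\]
The Weil pairing forces $\mu_d\subseteq\mathbb{F}_q^\times$ whenever $E[d]\subseteq E(\mathbb{F}_q)$, so only $d\mid q-1$ contribute; moreover $d^2\mid\#E(\mathbb{F}_q)\le(\sqrt q+1)^2$, so the sum is in fact finite with $d\le\sqrt q+1$. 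For such $d$ the two conditions defining $T_d(m)$ are jointly equivalent to the existence of an embedding $\mathbb{Z}/\operatorname{lcm}(m,d)\mathbb{Z}\times\mathbb{Z}/d\mathbb{Z}\hookrightarrow E(\mathbb{F}_q)$.

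\textbf{Counting $T_d(m)$ on a modular curve.} For $B\mid A$ set $U(A,B)=\#'\{E/\mathbb{F}_q:\ \mathbb{Z}/A\mathbb{Z}\times\mathbb{Z}/B\mathbb{Z}\hookrightarrow E(\mathbb{F}_q)\}$, so $T_d(m)=U(\operatorname{lcm}(m,d),d)$. I will read $U(A,B)$ off the modular curve $X_{A,B}$ parametrizing $(E,P,Q)$ with $P$ of order $A$, $Q$ of order $B$, generating a group $\cong\mathbb{Z}/A\mathbb{Z}\times\mathbb{Z}/B\mathbb{Z}$ (a $\Gamma_1(A)\cap\Gamma(B)$-type structure). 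The ingredients are: $X_{A,B}$ has genus and number of cusps $O(\,[\operatorname{SL}_2(\mathbb{Z}):\Gamma]\,)$; it is defined over $\mathbb{Q}(\zeta_B)$ and geometrically connected there, so its reduction is geometrically connected over $\mathbb{F}_q$ exactly when $B\mid q-1$ --- this is the origin of the dichotomy $q\equiv1$ versus $q\not\equiv1\pmod\ell$; and a groupoid-cardinality comparison relates $\#X_{A,B}(\mathbb{F}_q)$ (minus cusps) to $\sum_E(\text{number of level structures on }E)/\#\operatorname{Aut}(E)$, the level structure rigidifying all automorphisms away from $j=0,1728$ (which contribute $O(1)$), with the usual corrections when $2$ or $3$ divides the level. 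Combining these with the Weil bound $\#X_{A,B}(\mathbb{F}_q)=q+1+O(g\sqrt q)$ and the Chinese Remainder factorization of level structures, one obtains $U(A,B)=q\prod_\ell u_\ell\bigl(v_\ell(A),v_\ell(B)\bigr)+O\!\bigl(\sqrt q/\operatorname{rad}(A)\bigr)$ for explicit multiplicative local densities $u_\ell$, themselves computed by the same recipe at prime-power level: e.g. $u_\ell(1,1)=1/|\operatorname{SL}_2(\mathbb{F}_\ell)|=1/(\ell(\ell^2-1))$ if $\ell\mid q-1$ and $0$ otherwise, while $u_\ell(e,0)$ is the $Y_1(\ell^e)$-density of curves with a rational point of order $\ell^e$.

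\textbf{Assembling the product.} Substituting into the M\"obius sum and separating the (finitely many) primes $\ell\mid m$ from the primes coprime to $m$ gives
\[\#'C_q(m)=q\prod_{\ell\mid m}\bigl(u_\ell(v_\ell(m),0)-u_\ell(v_\ell(m),1)\bigr)\prod_{\substack{\ell\nmid m\\ \ell\mid q-1}}\bigl(1-u_\ell(1,1)\bigr)+(\text{error}).\]
Here $u_\ell(v_\ell(m),0)-u_\ell(v_\ell(m),1)$ is the local probability that the $\ell$-part of $E(\mathbb{F}_q)$ is cyclic and contains a point of order $\ell^{v_\ell(m)}$; a direct computation --- splitting on whether $\ell\mid q-1$, using the $Y_1(\ell^{v_\ell(m)})$- and $Y(\ell)$-densities together with the small-level stack corrections --- shows it equals $1/\ell^{v_\ell(m)}$ when $q\equiv1\pmod\ell$ and $1/\varphi(\ell^{v_\ell(m)})$ otherwise, while $1-u_\ell(1,1)=1-\tfrac{1}{\ell(\ell^2-1)}$ for $\ell\nmid m$ with $\ell\mid q-1$. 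This is exactly the claimed main term; the constant $C_m$ in Table \ref{tab:table-cm} is the value of $\prod_{\ell\mid m}(u_\ell(v_\ell(m),0)-u_\ell(v_\ell(m),1))\cdot\prod_{\ell\mid m}(1-\tfrac{1}{\ell(\ell^2-1)})^{-1}$ after factoring out the generic cyclicity density at the primes dividing $m$.

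\textbf{The main obstacle.} I expect the crux to be the error term: one must show $\sum_{d\le\sqrt q,\ d\mid q-1}\mu^2(d)\,\bigl|T_d(m)-q\prod_\ell u_\ell(\cdots)\bigr|=O(\sqrt q)$. The per-$d$ Weil error is of size $\sqrt q/\operatorname{rad}(d)$ (the genus grows like the index divided by the level), whose sum over squarefree $d\mid q-1$ is only $O(\sqrt q\log\log q)$, so to reach a clean $O(\sqrt q)$ one must argue more carefully in the tail --- using that $E[d]\subseteq E(\mathbb{F}_q)$ already forces $d\le\sqrt q+1$ together with the crude bound $U(d,d)\ll 1+q/|\operatorname{SL}_2(\mathbb{Z}/d)|$ on a dyadic range of large $d$, or, as Vl\u{a}du\c{t} does, re-expressing the counts through Hurwitz class numbers and invoking known estimates for their sums over the Hasse interval. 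The other delicate point is getting the automorphism bookkeeping at $2$ and $3$ exactly right (relevant precisely when $m$ is even or $3\mid m$, i.e. for $m\in\{4,6,8,9,10,12\}$): this affects only $O(1)$ curves, so it is invisible in the error but pins down the constants $C_m$.
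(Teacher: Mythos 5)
Your skeleton --- the M\"obius decomposition over squarefree $d$ via $\sum_{d\mid n_2}\mu(d)=\mathbf{1}[n_2=1]$, multiplicative local densities, and the resulting Euler product --- is sound, and it is essentially the same inclusion--exclusion the paper performs. The genuine gaps are in where your input counts come from. You propose to extract $U(A,B)$ from point counts on modular curves $X_{A,B}$, but the groupoid-cardinality comparison you invoke yields $\sum_E \#\{\text{level structures on }E\}/\#\Aut(E)$, i.e.\ each curve is counted with multiplicity equal to its number of embeddings $\mathbb{Z}/A\mathbb{Z}\times\mathbb{Z}/B\mathbb{Z}\hookrightarrow E(\mathbb{F}_q)$, a multiplicity that depends on the unknown group structure of $E(\mathbb{F}_q)$; converting this into $U(A,B)$ (each curve weighted once by $1/\#\Aut$) requires a further inversion over group structures that you never supply, and the component/rationality analysis when the required roots of unity are absent is only gestured at. The paper sidesteps all of this by importing finished fixed-field counts: Howe's estimate for $\#W(a,b)$ (exact $b$-torsion structure) and the Castryck--Hubrechts estimate for $\#T_q(m)$, each already carrying an $O(q^{1/2})$ error; your route amounts to re-proving those theorems in outline. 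Relatedly, you acknowledge that your error analysis is open: summing per-$d$ Weil errors gives only $O(\sqrt{q}\log\log q)$, and the proposed repairs (dyadic tail bounds, Hurwitz class numbers) are sketches, whereas the paper's $O(q^{1/2})$ is inherited from the cited counts exactly as in Vl\u{a}du\c{t}'s original argument.

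More importantly, the step you dismiss as ``a direct computation'' is the actual content of the theorem. The local density $u_\ell(e,0)$ of curves with a rational point of order $\ell^e$ is $r_q'(\ell^e)$, which depends on the full valuation $v=v_\ell(q-1)$ (different formulas for $v\ge e$ and $v<e$), not merely on whether $q\equiv 1\pmod{\ell}$; the same is true of $u_\ell(e,1)$, so your framing of the dichotomy as coming from geometric connectedness when $\ell\mid q-1$ is too coarse. That the alternating combination $u_\ell(e,0)-u_\ell(e,1)$ collapses to $1/\ell^{e}$ when $q\equiv 1\pmod{\ell}$ and to $1/\varphi(\ell^{e})$ otherwise is precisely the identity $r_q'(m)=\sum_{d\mid m}\tilde{w}(d,m)$ that the paper establishes in Lemma~\ref{basecase} by a case analysis on $v$ with telescoping sums, and then extends multiplicatively by induction on the number of prime factors of $m$. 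Without carrying out that computation (or an equivalent), your argument fixes only the shape of the answer, not the stated local factors; as written the proposal is a plausible program, structurally parallel to the paper, but with its two crucial steps --- the local identity and the clean $O(q^{1/2})$ error --- left unproved.
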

The structure of this paper is as follows. In Section 2, we give the proof of Theorem \ref{fixedfield}. In Section 3, we describe the isomorphism classes of $E/\mathbb{F}_p$ in the torsion families described above. In Section 4, we give the proof of Theorem \ref{mainresult}.

\section{Fixed Field Counts}
There has been significant recent interest in counting problems for elliptic curves over a fixed finite field $\mathbb{F}_q$ with specified conditions on their group of $\mathbb{F}_q$-rational points. See for example Howe, \cite{howeMR1204781}, Vl\u{a}du\c{t} \cite{vladut}, Castryck and Hubrechts \cite{castryckhubrechts} and Kaplan and Petrow \cite{kaplanpetrow}.

Our goal is to generalize Vl\u{a}du\c{t}'s result giving the number of elliptic curves $E/\mathbb{F}_q$ such that $E(\mathbb{F}_q)$ is cyclic to obtain a count of the number of $E/\mathbb{F}_q$ such that for some fixed $m\in \mathbb{Z}$, $E(\mathbb{F}_q)$ has a point of order $m$ and is cyclic. We begin by recalling the various fixed field counts we will require.

Denote by $\varphi$ the Euler totient function, and define $\psi(n)=n\prod_{l\mid n}(1+1/l)$. For $a\mid b$, denote by $W(a, b)=\{E/\mathbb{F}_q: E[b](\mathbb{F}_q)\cong \left(\mathbb{Z}/a\mathbb{Z}\right)\times\left(\mathbb{Z}/b\mathbb{Z}\right)\}/_{\cong\mathbb{F}_q}$. Estimates for the size of $W(a,b)$ are given by Howe \cite{howeMR1204781}. Howe shows that 
\[ \mid \#W(a, b)-\hat{w}(a,b)\mid<Cq^{1/2} 
\]
for an explicit constant $C$ where 
\[\hat{w}(a, b)= \frac{q\psi(b/a)}{a\varphi(b)\psi(b)}\prod_{\substack{l \text{ prime}\\ l\mid \gcd(b, q-1)/b}}\left(1-\frac{1}{l}\right).\]
It will be convenient to define $\tilde{w}(a,b)=\hat{w}(a,b)/q$. Howe notes that $\tilde{w}(a, b)$ is a multiplicative function of both arguments simultaneously.

Vl\u{a}du\c{t} observes the `obvious' cyclicity condition: $E(\mathbb{F}_q)$ is cyclic if and only if for any prime $l\mid q-1$, $E \not\in W(l,l)$. Assuming that our curve $E$ has a point of order $m$, we observe that $E(\mathbb{F}_q)$ is cyclic if and only if for all $l\mid q-1$, 
\[
\begin{cases} 
E \not\in W(l,m) \text{ (in the case where $l\mid m$)},\\
E \not\in W(l,lm) \text{ (in the case where $l\nmid m$)}.
\end{cases}
\]
For $q=p^n$, define $r_q'(m)$ by the following conditions:
\begin{enumerate}[(a)]
\item $r_q'$ is multiplicative.
\item For $l$ prime, $l\neq p$, we have 
\[ r_q'(l^n)=\begin{cases} 1/(l^n-l^{n-2}) &\text{ if } v\geq n, \\
(l^{2v+1}+1)/(l^{n+2v-1}(l^2-1)) &\text{ if } v<n,
\end{cases}
\]
where $v=v_l(q-1)$.
\item $r_q'(p^e)=1/(p^e-p^{e-1})$.
\end{enumerate}
Denote by $T_q(m)=\{E/\mathbb{F}_q: E\text{ has a point of order $m$}\}/_{\cong\mathbb{F}_q}$. It follows from Theorem 3 of \cite{castryckhubrechts} that 
\[\mid\#T_q(m)-qr'(m)\mid\leq Cm^2\log\log(m)q^{1/2}\]
for an absolute and explicitly computable $C\in \mathbb{R}_{>0}$.

Applying these estimates and the inclusion/exclusion principle, we obtain the estimate
\begin{equation} \label{eq:1}
\#'C_q(m)=qr'_q(m)-\sum_{\substack{d\mid m \\ d>1}}\hat{w}(d, m)+\sum_{\substack{t\mid q-1\\ \gcd(t, m)=1}}\mu(t)\hat{w}(t, mt)+O\left(q^{1/2}\right).
\end{equation}
Note that we have
\[\tilde{w}(1, m)=\prod_{l\mid m}\tilde{w}(1, l^{v_l(m)})=\prod_{\substack{l\mid m\\v_l(q-1)=0}}\frac{1}{\varphi(l^{v_l(m)})}\prod_{\substack{l\mid m\\v_l(q-1)>0}}\frac{1}{l^{v_l(m)}}.\] 

Equating the right hand side of \eqref{eq:1} to the right hand side of the equation in Theorem \ref{fixedfield} and dividing through by $q$, we see that it is sufficient to show that
\[
r'_q(m)-\sum_{\substack{d\mid m \\ d>1}}\tilde{w}(d, m)+\sum_{\substack{t\mid q-1\\ \gcd(t, m)=1}}\mu(t)\tilde{w}(t, mt)=\tilde{w}(1, m)\prod_{\substack{\ell|q-1\\ \ell\nmid m}}\left(1-\frac{1}{\ell(\ell^2-1)}\right)+O\left(q^{-1/2}\right). 
\]
Furthermore, since $\tilde{w}(t, mt)=\tilde{w}(t,t)\tilde{w}(1,m)$ for $t$ relatively prime to $m$, it suffices to show that 
\begin{equation} \label{eq:2}
r_q'(m)=\sum_{\substack{d\mid m}}\tilde{w}(d, m). 
\end{equation}
Indeed, in this case, we have 
\begin{eqnarray*}
C_q(m) &=&qr'_q(m)-\sum_{\substack{d\mid m\\ d>1}} \hat{w}(d, m)+\sum_{\substack{t\mid q-1\\ \gcd(t, m)=1}}\mu(t)\hat{w}(t, mt)+O(q^{1/2})\\
&=& qr_q'(m)-q\sum_{\substack{d\mid m\\ d>1}} \tilde{w}(d, m)+q\sum_{\substack{t\mid q-1\\ \gcd(t, m)=1}}\mu(t)\tilde{w}(t, t)\tilde{w}(1, m)+O(q^{1/2}))\\
&=&q\tilde{w}(1, m)\left(1+\sum_{\substack{t\mid q-1\\ \gcd(t, m)=1}}\frac{\mu{t}}{t\varphi(t)\psi(t)}\right)+O(q^{1/2})\\
&=& q\prod_{\substack{l\mid m\\v_l(q-1)=0}}\frac{1}{\varphi(l^{v_l(m)})}\prod_{\substack{l\mid m\\v_l(q-1)>0}}\frac{1}{l^{v_l(m)}}\prod_{\substack{l\mid q-1\\ \gcd(m, l)=1}}\left(1-\frac{1}{l(l^2-1)}\right)+O(q^{1/2}).
\end{eqnarray*}

We will prove \eqref{eq:2} by induction on the number of prime factors of $m$.
The following lemma provides the base of induction.
\begin{lemma}\label{basecase} Denote by $v$ the $\ell$-adic valuation of $q-1$. Then
\begin{eqnarray}
r_q'(\ell^e)=\sum_{k=0}^t\tilde{w}(\ell^k, \ell^e)
\end{eqnarray}
where 
\begin{eqnarray*}
t=\begin{cases} v &\text{ if } v<e\\
				e & \text{ if } v\geq e.
				\end{cases}
\end{eqnarray*}
\end{lemma}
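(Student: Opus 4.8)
The plan is to prove the identity by putting both sides into closed form in terms of $\ell$, $e$, and $v$, and checking they agree. Heuristically Lemma~\ref{basecase} just records that the density of $E/\mathbb{F}_q$ carrying a point of order $\ell^e$ is the sum, over the admissible $\ell^e$-torsion structures, of the densities of those $E$ with $E[\ell^e](\mathbb{F}_q)\cong\mathbb{Z}/\ell^k\mathbb{Z}\times\mathbb{Z}/\ell^e\mathbb{Z}$: the Weil pairing forces $\mu_{\ell^k}\subseteq\mathbb{F}_q$, so $k\le v$, while $k\le e$ holds automatically, which is exactly why the sum runs over $0\le k\le t=\min(v,e)$. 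This is only a sanity check, since $r_q'$ and the $\tilde w(\cdot,\cdot)$ are \emph{defined} by the explicit formulas coming from Castryck--Hubrechts and Howe; so the actual argument is a direct computation matching those formulas.

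First I would evaluate $\tilde w(\ell^k,\ell^e)=\hat w(\ell^k,\ell^e)/q$ for $0\le k\le t$ from Howe's formula, using $\varphi(\ell^j)=\ell^{j-1}(\ell-1)$ and $\psi(\ell^j)=\ell^{j-1}(\ell+1)$. Since $\gcd(\ell^e,q-1)=\ell^{t}$, Howe's correction factor evaluates to $1-1/\ell$ when $0\le k\le t-1$ (because then $\ell\mid\ell^{t-k}$) and to the empty product $1$ when $k=t$; carrying out the cancellations yields
\[
\tilde w(\ell^k,\ell^e)=
\begin{cases}
\ell^{-(2k+e)}, & 0\le k\le t-1,\\[1mm]
\ell^{-(2v+e-1)}(\ell-1)^{-1}, & k=t,\ v<e,\\[1mm]
\ell^{-(3e-2)}(\ell^2-1)^{-1}, & k=t,\ v\ge e.
\end{cases}
\]
I would then sum the geometric series $\sum_{k=0}^{t-1}\ell^{-(2k+e)}=\ell^{-e}\,\frac{1-\ell^{-2t}}{1-\ell^{-2}}=\frac{\ell^{2-e}-\ell^{2-e-2t}}{\ell^2-1}$ (empty, hence zero, when $t=0$) and add the boundary term in the two cases separately. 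When $v\ge e$ (so $t=e$) the tail $-\ell^{2-3e}/(\ell^2-1)$ cancels against $\ell^{-(3e-2)}(\ell^2-1)^{-1}$, leaving $\ell^{2-e}/(\ell^2-1)=1/(\ell^e-\ell^{e-2})$. When $v<e$ (so $t=v$) a one-line manipulation gives $\ell^{2-e}/(\ell^2-1)+\ell^{-(2v+e-1)}(\ell^2-1)^{-1}=(\ell^{2v+1}+1)/(\ell^{e+2v-1}(\ell^2-1))$. These are precisely the two branches of the definition of $r_q'(\ell^e)$, which proves the lemma.

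The step that needs real care is the behaviour of Howe's correction factor at the endpoint $k=t$: when $v<e$ the exponent $\min(e,v)$ appearing in $\gcd(\ell^e,q-1)$ ``freezes'' at $v$, so the factor $1-1/\ell$ is present for every $k<v$ but must drop out exactly at $k=v$, and moreover the residual value at $k=t$ genuinely differs according to whether $t=e$ or $t=v<e$. Pinning down that dichotomy (and keeping the $\varphi$ versus $\psi$ factors straight) is the only subtle point; once it is settled, the remainder is elementary geometric-series algebra together with the identity $\ell^n-\ell^{n-2}=\ell^{n-2}(\ell^2-1)$. With Lemma~\ref{basecase} in hand, the general case of \eqref{eq:2} will then follow by induction on the number of prime factors of $m$, using the multiplicativity of $r_q'$ and of $\tilde w$ in its arguments.
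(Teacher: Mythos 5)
Your proposal is correct and follows essentially the same route as the paper: evaluate $\tilde w(\ell^k,\ell^e)$ from Howe's formula (with the correction factor $1-1/\ell$ present exactly for $k\le t-1$ and absent at $k=t$), sum over $k$, and match the two branches of $r_q'(\ell^e)$ according to whether $v<e$ or $v\ge e$; your geometric-series summation is the same elementary step the paper carries out as a telescoping sum, with your $t=0$ subcase corresponding to the paper's separate $v=0$ case. The closed forms you give for $\tilde w(\ell^k,\ell^e)$ and the resulting totals agree with the paper's computation, so no gap.
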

\begin{proof}
Suppose that $v=0$. Then 
\begin{eqnarray*}
\sum_{k=0}^v \tilde{w}(\ell^k, \ell^e) =\tilde{w}(1, \ell^e) =\frac{\psi(\ell^e)}{\varphi(\ell^e)\psi(\ell^e)}=r_q'(\ell^e).
\end{eqnarray*}

Now suppose that $0<v<e$. Then we have

\begin{eqnarray*}
\sum_{k=0}^v\tilde{w}(\ell^k, \ell^e) &=& \frac{\psi(\ell^{e-v})}{\ell^v\varphi(\ell^e)\psi(\ell^e)} + \sum_{k=0}^{v-1}\frac{\psi(\ell^{e-k})(\ell-1)}{\ell^{k+1}\varphi(\ell^e)\psi(\ell^e)} \\
&=& \frac{\ell^{e-v-1}(\ell+1)}{\ell^v\varphi(\ell^e)\psi(\ell^e)} + \sum_{k=0}^{v-1}\frac{\ell^{e-k-1}(\ell^2-1)}{\ell^{k+1}\varphi(\ell^e)\psi(\ell^e)} \\
&=& \frac{1}{\ell^e\varphi(\ell^e)\psi(\ell^e)}\left( \ell^{2e-2v}+\ell^{2e-2v-1}+ \sum_{k=0}^{v-1}(\ell^{2e-2k}-\ell^{2e-2k-2}) \right).
\end{eqnarray*}

The sum above telescopes to $\ell^{2e}-\ell^{2e-2v}$, and we are left with
\begin{eqnarray*}
 \frac{ \ell^{2e}+\ell^{2e-2v-1}}{\ell^e\varphi(\ell^e)\psi(\ell^e)}
&=& \frac{\ell^{2e-2v-1}(\ell^{2v+1}+1)}{\ell^{3e-2}(\ell^2-1)}\\
&=&\frac{\ell^{2v+1}+1}{\ell^{e-2v-1}(\ell^2-1)} =r_q'(\ell^{e}),
\end{eqnarray*}

as needed.

Finally, suppose $v\geq e$. we have 
\begin{eqnarray*}
\sum_{k=0}^e\tilde{w}(\ell^k, \ell^e)&=&\frac{1}{\ell^e\varphi(\ell^e)\psi(\ell^e)} + \sum_{k=0}^{e-1}\frac{\psi(\ell^{e-k})(\ell-1)}{\ell^{k+1}\varphi(\ell^e)\psi(\ell^e)}\\ &=& \frac{1}{\ell^e\varphi(\ell^e)\psi(\ell^e)}+\sum_{k=0}^{e-1}\frac{\ell^{e-k-1}(\ell^2-1)}{\ell^{k+1}\varphi(\ell^e)\psi(\ell^e)} \\ 
&=& \frac{1}{\ell^e\varphi(\ell^e)\psi(\ell^e)}\left(1 + \sum_{k=0}^{e-1}(\ell^{2e-2k}-\ell^{2e-2k-2}) \right).
\end{eqnarray*}
As above, this sum telescopes, leaving $\ell^{2e}-1$. We are left with
\begin{eqnarray*}
\frac{\ell^{2e}}{\ell^e\varphi(\ell^e)\psi(\ell^e)}=\frac{\ell^{2e}}{\ell^{3e-2}(\ell^2-1)}=\frac{1}{\ell^e-\ell^{e-2}}=r_q'(\ell^e).
\end{eqnarray*}

\end{proof}

\begin{proof}[Proof of Theorem \ref{fixedfield}]
Assume for induction that for some $m>1$ we have
\begin{eqnarray*} 
r_q'(m)=\sum_{\substack{d\mid m}} \tilde{w}(d, m).
\end{eqnarray*}

Let $\ell$ be a prime that does not divide $m$ and let $e\geq 1$. Let $t$ be as in the statement of Lemma \ref{basecase}. We have
 
\begin{eqnarray*}
\sum_{\substack{d\mid m\ell^e}} \tilde{w}(d, m\ell^e)=\sum_{k=0}^t \sum_{\substack{d\mid m}}\tilde{w}(\ell^k d, \ell^e m)=\sum_{k=0}^t\tilde{w}(\ell^k, \ell^e)\sum_{\substack{d\mid m}}\tilde{w}(d, m)= r_q'(\ell^e) r_q'(m)=r_q'(\ell^em)
\end{eqnarray*} 
by Lemma \ref{basecase} and the induction hypothesis.

\end{proof}

\section{Isomorphisms of $m$-torsion Curves}

The parameterizations given in Table \ref{tab:table-parameterizations} were derived by Kubert by studying the modular curve $X_1(m)$. A point of $X_1(m)$ corresponds to an elliptic curve $E$ together with a point of order $m$ up to action by automorphisms of $E$. For example, if $(E, P)$ represents a point of $X_1(m)$ and $\#\text{Aut}(E)=2$, then $(E, -P)$ represents the same point. We will be concerned with which parameter values yield isomorphic curves over $\mathbb{F}_q$ where $q=p^n$ and $p>3$. With at most 10 exceptions, an isomorphism class of elliptic curves Over $\mathbb{F}_q$ consists of curves whose automorphism group has cardinality 2. If $\#\Aut(E)=2$ and $E$ has cyclic $m$-torsion, then there are $\varphi(m)/2$ parameters in $\mathbb{F}_q$ that yield an isomorphic curve. These correspond to the $\varphi(m)$ points of order $m$ on $E_m(a)$, up to the action of of $\Aut(E_m(a))$. 

If $\#\Aut(E)>2$, the number of parameters yielding an isomorphic curve will vary depending on the size of the automorpism group and the number of $m$-torsion points of $E$; in general, the number of parameters which yield an isomorphic curve will not be $\varphi(m)/2$. However, these $O(1)$ isomorphism classes can be absorbed into the `unweighted' version of Theorem \ref{fixedfield}.
 
\begin{corollary}[to Theorem \ref{fixedfield}]\label{unweightedfixedfield}
\begin{eqnarray*}
C_q(m)=2q\hspace{-16pt}\prod_{\substack{\ell|m\\ q\equiv 1\pmod*{\ell}}}\frac{1}{\ell^{v_\ell(m)}}\prod_{\substack{\ell|m\\ q\not\equiv 1\pmod*{\ell}}}\frac{1}{\varphi(\ell^{v_\ell(m)})}\prod_{\substack{\ell\nmid m\\ q\equiv 1\pmod*{\ell} }}\left(1-\frac{1}{\ell(\ell^2-1)}\right)+O\left(q^{1/2}\right).
\end{eqnarray*}
\end{corollary}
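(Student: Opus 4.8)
The plan is to deduce the corollary directly from Theorem \ref{fixedfield} by comparing the weighted and unweighted cardinalities, using the standard fact that over a field of characteristic $p>3$ only boundedly many isomorphism classes of elliptic curves over $\mathbb{F}_q$ have automorphism group strictly larger than $\{\pm 1\}$.

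First I would unwind the definition of the weighted count, writing the sum over isomorphism classes of curves in $C_q(m)$:
\[
\#'C_q(m)=\sum_{E\in C_q(m)}\frac{1}{\#\Aut(E)}=\frac{1}{2}\,C_q(m)+\sum_{\substack{E\in C_q(m)\\ \#\Aut(E)\neq 2}}\left(\frac{1}{\#\Aut(E)}-\frac{1}{2}\right),
\]
where on the right $C_q(m)$ denotes the (unweighted) number of isomorphism classes. The conditions defining $C_q(m)$ -- being cyclic and possessing a point of order $m$ -- are isomorphism invariants, so this rearrangement is legitimate. Next I would invoke the fact (as in \cite{lenstrafactoringintegers}, and already used in Section 3) that over $\overline{\mathbb{F}}_q$ there are exactly two $j$-invariants, $j=0$ and $j=1728$, whose curves carry extra automorphisms, and each of these contributes only a bounded number of twist classes over $\mathbb{F}_q$ (at most $\gcd(6,q-1)$ sextic twists and $\gcd(4,q-1)$ quartic twists, giving at most ten exceptional classes). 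Hence the correction sum ranges over $O(1)$ curves, and since every summand has absolute value at most $1$, the correction is $O(1)$. Consequently $C_q(m)=2\,\#'C_q(m)+O(1)$.

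Substituting the estimate of Theorem \ref{fixedfield} and absorbing the $O(1)$ term into the existing error $O\left(q^{1/2}\right)$ then yields exactly the stated formula. There is no genuine obstacle here; the one point requiring care is the uniformity (in $q$) of the bound on the number of exceptional isomorphism classes, which is precisely where the hypothesis $p>3$ is used. For $p=2,3$ the automorphism groups can be larger still, but their number remains $O(1)$, so the same argument and conclusion apply.
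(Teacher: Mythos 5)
Your proposal is correct and follows essentially the same route as the paper: split off the $O(1)$ isomorphism classes with $\#\Aut(E)>2$ (the $j=0$ and $j=1728$ twists), conclude $C_q(m)=2\,\#'C_q(m)+O(1)$, and substitute Theorem \ref{fixedfield}, absorbing the correction into the $O\left(q^{1/2}\right)$ error. The paper phrases this by decomposing $C_q(m)$ into classes with automorphism group of size $2$, $4$, $6$ and solving for the size-$2$ part, but the content is identical to your rearrangement.
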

\begin{proof}
Denote by $C_q(m, n)=\{E\in C_q(m)\colon \#\Aut(E)=n\}$. Then
\begin{eqnarray*}
C_q(m)=C_q(m, 2)+C_q(m, 4)+C_q(m, 6).
\end{eqnarray*}
We then have
\begin{eqnarray*}
\#C_q(m, 2)/2=\#'C_q(m)-\#C_q(m, 4)/4\#C_q(m, 6)/6.
\end{eqnarray*}
Multiplying by 2 and observing that $\#C_q(m, 4)/2+\#C_q(m, 6)/3=O(1)$, we have \begin{eqnarray*}
\#C_q(m, 2)=2q\hspace{-16pt}\prod_{\substack{\ell|m\\ q\equiv 1\pmod*{\ell}}}\frac{1}{\ell^{v_\ell(m)}}\prod_{\substack{\ell|m\\ q\not\equiv 1\pmod*{\ell}}}\frac{1}{\varphi(\ell^{v_\ell(m)})}\prod_{\substack{\ell\nmid m\\ q\equiv 1\pmod*{\ell} }}\left(1-\frac{1}{\ell(\ell^2-1)}\right)+O\left(q^{1/2}\right)
\end{eqnarray*}
as required.
\end{proof}

Given an $m$-torsion curve $E_m(a)$, we can perform a change of variables to obtain a short Weierstrass equation $y^2=x^3+Ax+B$. In order for $E_m(a)$ to have more than two automorphisms, the $j$-invariant must be $0$ or 1728. In terms of the short Weierstass equation, this means that $A=0$ or $B=0$, respectively. The coefficients $A$ and $B$ will be polynomials or rational functions in the parameter $a$. Since such functions have finitely many zeros, we deduce the following

\begin{lemma} \label{extraautbound}
Each torsion family $E_m(a)$ contains finitely many curves with $j$-invariant $0$ or $1728$. The parameters which yield curves with these $j$-invariants are roots of a polynomial that depends only on $m$. 
\end{lemma}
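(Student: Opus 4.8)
The plan is to make explicit the reduction to a short Weierstrass model and then invoke the finiteness of zeros of a nonzero rational function. First I would recall that any elliptic curve over a field of characteristic $\neq 2, 3$ given by a general Weierstrass equation $y^2 + a_1 xy + a_3 y = x^3 + a_2 x^2 + a_4 x + a_6$ can be brought, by a standard admissible change of variables, to the form $y^2 = x^3 + Ax + B$, where $A$ and $B$ are universal polynomial expressions in $b_2, b_4, b_6$ (hence in the $a_i$); concretely $A = -27 c_4$ and $B = -54 c_6$ up to the usual scaling, with $c_4, c_6$ the standard invariants. In the families of Table \ref{tab:table-parameterizations}, each $a_i$ is a rational function of the parameter $a$ with coefficients in $\mathbb{Q}$ (for $m = 4,5,6,7,9$ they are polynomials; for $m = 8,10,12$ they are genuine rational functions), so $A = A_m(a)$ and $B = B_m(a)$ are rational functions of $a$ depending only on $m$.

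Next I would use the classical fact that over an algebraically closed field of characteristic $\neq 2,3$ the curve $y^2 = x^3 + Ax + B$ has $\#\Aut(E) > 2$ precisely when its $j$-invariant is $0$ or $1728$, equivalently when $B = 0$ (the case $j = 0$) or $A = 0$ (the case $j = 1728$). Thus the parameter values $a$ for which $E_m(a)$ has extra automorphisms are exactly the values at which the rational function $A_m(a)$ or the rational function $B_m(a)$ vanishes (discarding also the finitely many $a$ at which the family degenerates, i.e. where $\Delta_m(a) = 0$, since these do not give elliptic curves at all).

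Finally I would argue that neither $A_m$ nor $B_m$ is identically zero: if $A_m \equiv 0$ then every member of the family would have $j$-invariant $1728$, and if $B_m \equiv 0$ every member would have $j$-invariant $0$, either of which would contradict the fact that $j$ is a nonconstant function on the modular curve $X_1(m)$ (equivalently, one can simply exhibit a single parameter value — e.g. a small rational $a$ avoiding the zeros of $\Delta_m$ — for which the curve has $j \notin \{0, 1728\}$, by direct computation). Writing $A_m(a) = p_m(a)/r_m(a)$ and $B_m(a) = s_m(a)/t_m(a)$ in lowest terms with $p_m, s_m \in \mathbb{Q}[a]$ nonzero, the desired polynomial is $p_m(a)\, s_m(a)$ (optionally multiplied by the numerator of $\Delta_m(a)$ to absorb the degenerate parameters); it depends only on $m$ and has finitely many roots, which proves the claim. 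The only mild obstacle is bookkeeping: one must be careful that the change of variables to short Weierstrass form is genuinely defined at all parameter values where $\Delta_m(a) \neq 0$ (it is, since it only requires inverting $2$ and $3$), so that "$A_m(a) = 0$" and "$B_m(a) = 0$" really do capture the extra-automorphism locus and nothing is lost to the denominators.
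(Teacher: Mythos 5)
Your proposal is correct and follows essentially the same route as the paper: pass to a short Weierstrass model $y^2 = x^3 + A_m(a)x + B_m(a)$, observe that extra automorphisms occur exactly when $j \in \{0,1728\}$, i.e.\ when $A_m(a)=0$ or $B_m(a)=0$, and conclude because these are rational functions of $a$ depending only on $m$ with finitely many zeros; your extra verification that $A_m$ and $B_m$ are not identically zero (via nonconstancy of $j$ on the family) is a point the paper leaves implicit, and it is a worthwhile addition. One small slip: for $y^2 = x^3 + Ax + B$ one has $j=0$ precisely when $A=0$ and $j=1728$ precisely when $B=0$ (you swapped the two labels), but this is harmless since your polynomial $p_m(a)\,s_m(a)$ captures both vanishing loci anyway.
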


Using explicit change of variables, it is possible to specify precisely which parameter values yield isomorphic curves. We summarize this below for curves with $\#\Aut(E)=2$.
\begin{table}[ht]
\centering
\begin{tabular}{|c|c|c|c| }
  \hline
$m$& \multicolumn{3}{|c|}{ Parameters } \\
  \hline
  4 &  \multicolumn{3}{|c|}{$a$} \\
\hline
 5 &\multicolumn{2}{|c|}{$a$} &\multicolumn{1}{|c|}{$-a^{-1}$}\\
\hline
  6 &\multicolumn{3}{|c|}{$a$}   \\
\hline
  7 & $a$  &$(1-a)a^{-1}$ & $-(1-a)^{-1}$ \\
\hline
  8 & $a$  &\multicolumn{2}{|c|}{ $-a+1$ }    \\
\hline
  9 &   $a$&  $(a-1)a^{-1}$  & $-(a-1)^{-1} $   \\
\hline
  10 & $a$&\multicolumn{2}{|c|}{  $(a-1)(2a-1)^{-1}$}    \\
\hline
  12 & $a$ & \multicolumn{2}{|c|}{ $-a+1$}\\
  \hline
\end{tabular}
\caption{Parameters yielding isomorphic curves. \label{tab:table-parms}}
\end{table}
\FloatBarrier


\section{Proof of Theorem \ref{mainresult}}
The fixed field counts of $E/\mathbb{F}_p$ which have an $m$-torsion point and cyclic group of $\mathbb{F}_p$-points depends on the value of $p$ modulo the prime divisors of $m$. For one-parameter torsion families over $\mathbb{Q}$, we are concerned with $m\in \{4,5,6,7,8,9,10, 12\}$. In this case, $m$ has at most two prime divisors, and if $m$ is not a prime power, then one of its prime factors is 2. Since all odd primes are $1\pmod{2}$, the number of curves we are counting varies according to the value of $p\pmod{\ell}$ where $\ell$ is the unique odd prime divisor of $m$.

Let $m\in \{4,5,6,7,8,9,10,12\}$, and denote by 
\begin{eqnarray*}
\mathcal{E}_m(A)=\{E_m(a):-A\leq a \leq A\}
\end{eqnarray*}
the family of elliptic curves over $\mathbb{Q}$ with an $m$-torsion point given above. Write the prime factorization of $m$ as $m=2^k\ell_0^n$ where we take $n=\ell_0=1$ if $m$ is a power of 2. Let $A\geq x^{1+\epsilon}$ for $x, \epsilon \geq 0$.  

We have
\begin{eqnarray*}
\sum_{|a|\leq A} \pi_{E_m(a)}^{cyc}(x)=\sum_{p\leq x}\sum_{\substack{b\in\mathbb{F}_p\\ \Delta_m(b)\neq 0\\ E_m(b)(\mathbb{F}_p) \text{ cyclic }}}\#\{a\in [-A, A]: E_m(a)_p\cong E_m(b)\}. 
\end{eqnarray*} There are $2A/p+O(1)$ values of $a$ which yield a particular Weierstrass model modulo $p$. Thus the expression above becomes
\begin{eqnarray*}
\sum _{p\leq x}\left(\left(\frac{2A}{p}+O(1)\right)\left(
\frac{\varphi(m)}{2}\hspace{-25pt}\sum_{\substack{b\in\mathbb{F}_p\\ \Delta_m(b)\neq 0\\ E_m(b)(\mathbb{F}_p) \text{ cyclic }\\ \#\Aut(E_m(b))=2}}1\hspace{25pt}+ 
\frac{\varphi(m)}{4}\hspace{-25pt}\sum_{\substack{b\in\mathbb{F}_p\\ \Delta_m(b)\neq 0\\ E_m(b)(\mathbb{F}_p) \text{ cyclic }\\ \#\Aut(E_m(b))=4}}1\hspace{25pt}+
\frac{\varphi(m)}{6}\hspace{-25pt}\sum_{\substack{b\in\mathbb{F}_p\\ \Delta_m(b)\neq 0\\ E_m(b)(\mathbb{F}_p) \text{ cyclic }\\ \#\Aut(E_m(b))=6}}1 \right)\right). 
\end{eqnarray*}

Applying the estimate obtained in Corollary \ref{unweightedfixedfield} and Lemma \ref{extraautbound}, this is equal to
\begin{eqnarray*}
\sum _{\substack{p\leq x\\ p\equiv 1\pmod*{\ell_0}}}\left(\frac{2A}{p}+O(1)\right)\left(\frac{\varphi(m)}{2}\frac{2p}{m}\prod_{\substack{\ell|p-1\\ \gcd(\ell, m)=1}}\left(1-\frac{1}{\ell(\ell^2-1)}\right)+O\left(p^{1/2}\right)\right)\\ 
+\sum _{\substack{p\leq x\\ p\not\equiv 1\pmod*{\ell_0}}}\left(\frac{2A}{p}+O(1)\right)\left(\frac{\varphi(m)}{2}\frac{2p}{2^k\varphi(\ell_0^n)}\prod_{\substack{\ell|p-1\\ \gcd(\ell, m)=1}}\left(1-\frac{1}{\ell(\ell^2-1)}\right)+O\left(p^{1/2}\right)\right).
\end{eqnarray*}
Simplifying and applying the trivial estimate \[\frac{\varphi(m)p}{2^k\varphi(\ell_0^n)}\prod_{\substack{\ell|p-1\\ \gcd(\ell, m)=1}}\left(1-\frac{1}{\ell(\ell^2-1)}\right)<p,\] we have
\begin{align}\label{sumoverprimes}
\sum _{\substack{p\leq x\\ p\equiv 1\pmod*{\ell_0}}} \frac{2A\varphi(m)}{m}  \prod_{\substack{\ell|p-1\\ \gcd(\ell, m)=1}}\left(1-\frac{1}{\ell(\ell^2-1)}\right)\nonumber \\ 
+\sum _{\substack{p\leq x\\ p\not\equiv 1\pmod*{\ell_0}}} \frac{2A\varphi(m)}{2^k\varphi(\ell_0^n)}  \prod_{\substack{\ell|p-1}}\left(1-\frac{1}{\ell(\ell^2-1)}\right) +O\left(\sum_{p\leq x}p\right).
\end{align}

Note that according to Lemma 3.4 of \cite{trevinoleastinertprime}, we have
\begin{eqnarray*}
\sum_{p\leq x}p = O\left(\frac{x^2}{2\log{x}}\right).
\end{eqnarray*}

 We analyze these two sums individually following \cite{banksshparlinski}. A main input to this analysis is a theorem on averages of multiplicative functions due to \cite[Theorem 3] {indlekofermeanbehaviour}.

Assume first that $m$ is not a power of $2$, so that $\ell_0>1$.
For any integer $n$, define 
\begin{eqnarray*}
\chi_{\ell_0}(n)=\begin{cases}
1 \text{ if } \ell_0\nmid n\\
0 \text{ if } \ell_0 |n,
\end{cases}
\end{eqnarray*}

\[ F(n) = \prod_{\substack{\ell|n\\ \ell\nmid m }}\left(1-\frac{1}{\ell(\ell^2-1)}\right),\]
and 
\[ F'(n) = \prod_{\substack{\ell|n\\ \ell\nmid m }}\left(1-\frac{1}{\ell(\ell^2-1)}\right)\chi_{\ell_0h}(n).\]
Note that $\chi_{\ell_0}$ and $F$ are multiplicative (whence $ F'$ is multiplicative as well). We compute

\begin{eqnarray*}
&&\sum _{\substack{p\leq x\\ p\equiv 1\pmod*{\ell_0}}}\prod_{\substack{\ell|p-1\\ \gcd(\ell, m)=1}}\left(1-\frac{1}{\ell(\ell^2-1)}\right)  \\
&=&\sum _{\substack{p\leq x}}   \prod_{\substack{\ell|p-1\\ \gcd(\ell, m)=1}}\left(1-\frac{1}{\ell(\ell^2-1)}\right)-\sum _{\substack{p\leq x\\ p\not\equiv 1\pmod*{\ell_0}}}   \prod_{\substack{\ell|p-1\\ \gcd(\ell, m)=1}}\left(1-\frac{1}{q(q^2-1)}\right)\\
&=&\sum _{\substack{p\leq x}}   \prod_{\substack{\ell|p-1\\ \gcd(\ell, m)=1}}\left(1-\frac{1}{\ell(\ell^2-1)}\right)-\sum _{\substack{p\leq x}}   \prod_{\substack{\ell|p-1\\ \gcd(\ell, m)=1}}\left(1-\frac{1}{\ell(\ell^2-1)}\right)\chi_{\ell_0}(p-1)\\
&=&\sum _{\substack{p\leq x}} F(p-1) +\sum _{\substack{p\leq x}} F'(p-1).
\end{eqnarray*}
 Denote by $G$ and $G'$ the Dirichlet convolution of the M\"{o}bius $\mu$ function with $F$, $F'$, respectively. Explicitly, $G$ and $G'$ are multiplicative functions defined on prime powers by
\[G(\ell^k)=\begin{cases}
\frac{-1}{\ell(\ell^2-1)} &\text{ if } \ell\nmid m, k=1\\
0                               &\text{ if } \ell\mid m, k=1\\
0                               &\text{ if } k>1,
\end{cases}
\]
and
\[G'(\ell^k)=\begin{cases}
\frac{-1}{\ell(\ell^2-1)} &\text{ if } \ell\nmid m, k=1\\
-1                               &\text{ if } \ell=\ell_0, k=1\\
0                               &\text{ if } \ell= 2\mid m, k=1\\
0                               &\text{ if } k>1.
\end{cases}
\]
Both pairs of functions $F, G$ and $F', G'$ satisfy the hypotheses of  \cite[Theorem 3]{indlekofermeanbehaviour}. It follows that
\[\frac{1}{\pi(x)}\sum_{p\leq x}F(p-1)=\sum_{d=1}^\infty\frac{G(d)}{\varphi(d)}+O_B(\log^{-B}x)\]
holds for any $B>0$, and similarly for $F', G'$. Now we have
\[\sum_{d=1}^\infty\frac{G(d)}{\varphi(d)}=\prod_{\ell\nmid m}\left(1-\frac{1}{\ell(\ell^2-1)(\ell-1)}\right),\]
and
\[\sum_{d=1}^\infty\frac{G'(d)}{\varphi(d)}=\frac{\ell_0-2}{\ell_0-1}\prod_{\ell\nmid m}\left(1-\frac{1}{\ell(\ell^2-1)(\ell-1)}\right).\]
Thus, 

\begin{eqnarray}\label{estfor1modl}
\sum _{\substack{p\leq x\\ p\equiv 1\pmod*{\ell_0}}} \frac{2A\varphi(m)}{m}  \prod_{\substack{\ell|p-1\\ \gcd(\ell, m)=1}}\left(1-\frac{1}{\ell(\ell^2-1)}\right)\nonumber \\ =\frac{2A\varphi(m)}{m}\frac{1}{\ell_0-1}\prod_{\ell\nmid m}\left(1-\frac{1}{\ell(\ell-1)(\ell^2-1)}\right)\pi(x)+O\left(A\frac{x}{\log^{B+1}}\right).
\end{eqnarray}

Similarly, we write
\begin{align*}
 \sum _{\substack{p\leq x\\ p\not\equiv 1\pmod*{\ell_0}}}  \prod_{\substack{\ell|p-1}}\left(1-\frac{1}{\ell(\ell^2-1)}\right)= \sum _{\substack{p\leq x}}  \prod_{\substack{\ell\mid p-1}}\left(1-\frac{1}{\ell(\ell^2-1)}\right)\chi_{\ell_0}(p-1)= \sum _{p\leq x}F'(p-1)
\end{align*}
so that, again by \cite[Theorem 3]{indlekofermeanbehaviour}, we have
\begin{align*}\frac{1}{\pi(x)}\sum_{p\leq x}F'(p-1)=\sum_{d=1}^\infty\frac{G'(d)}{\varphi(d)}+O_B(\log^{-B}x)\nonumber \\ =\frac{\ell_0-2}{\ell_0-1}\prod_{\ell\nmid 2\ell_0}\left( 1-\frac{1}{\ell(\ell-1)(\ell^2-1)}\right)+O_B(\log^{-B}x)
\end{align*}
holds for any $B>0$.
Thus,  
\begin{align}\label{estfornot1modl}
\sum _{\substack{p\leq x\\ p\not\equiv 1\pmod*{\ell_0}}} \frac{2A\varphi(m)}{2^k\varphi(\ell_0^n)}  \prod_{\substack{\ell|p-1}}\left(1-\frac{1}{\ell(\ell^2-1)}\right)\nonumber \\=\frac{2A\varphi(m)}{2^k\varphi(\ell_0^n)}\frac{\ell_0-2}{\ell_0-1}\prod_{\ell\nmid m}\left( 1-\frac{1}{\ell(\ell-1)(\ell^2-1)}\right)\pi(x)+O\left(A\frac{x}{\log^{B+1}}\right).
\end{align}

Combining \eqref{sumoverprimes}, \eqref{estfor1modl}, and \eqref{estfornot1modl}, we have 
\begin{eqnarray*}
\sum_{|a|\leq A} \pi_{E(a)}^{cyc}(x) = \left( \frac{2A\varphi(m)}{2^k\varphi(\ell_0^n)}\frac{\ell_0-2}{\ell_0-1}+\frac{2A\varphi(m)}{m}\frac{1}{\ell_0-1}\right)\prod_{\ell\nmid m}\left(1-\frac{1}{\ell(\ell-1)(\ell^2-1)}\right)\pi(x) \\ +O\left(A\frac{x}{\log^{B+1}}\right)+O\left(\frac{x^2}{2\log^{x}}\right).
\end{eqnarray*}

If $m$ is a power of $2$, the sum over $p\not \equiv 1 \pmod{\ell_0}$ is empty. In this case, $\varphi(m)/m=1/2$, and we are left to evaluate 
\[\frac{1}{2}\sum _{\substack{p\leq x}}   \prod_{\substack{\ell|p-1\\ \gcd(\ell, m)=1}}\left(1-\frac{1}{\ell(\ell^2-1)}\right).\]

An argument analogous to the above shows
\[
\sum_{|a|\leq A} \pi_{E(a)}^{cyc}(x) =A\prod_{\ell\neq 2}\left(1-\frac{1}{\ell(\ell-1)(\ell^2-1)}\right)\pi(x)  +O\left(A\frac{x}{\log^{B+1}}\right)+O\left(\frac{x^2}{2\log^{x}}\right).\]

Computing 

\begin{align*}
C_m=\left( \frac{2A\varphi(m)}{2^k\varphi(\ell_0^n)}\frac{\ell_0-2}{\ell_0-1}+\frac{2A\varphi(m)}{m}\frac{1}{\ell_0-1}\right) 
\end{align*} for $m\in\{4,5,6,7,8,9,10,12\}$,we complete the proof.


\bibliographystyle{plain}

\bibliography{bibtex.bib}

\end{document}